%% file: main_injectivity.tex
\documentclass[reqno, 12pt]{article}

\newif\ifdraft

\usepackage[a4paper, margin=.75in]{geometry}

\ifdraft
\usepackage[fontsize=13pt]{scrextend}
\usepackage[inline]{showlabels}
\fi

\usepackage{amsmath, amsthm, thmtools,  amssymb, mathtools}
\usepackage{enumerate}

\usepackage{import}

\usepackage[dvipsnames]{xcolor}
\usepackage{hyperref}
\hypersetup{
    colorlinks = true,
    linkcolor = {BrickRed},
    citecolor={RoyalBlue},
    urlcolor={RoyalPurple}
}
\usepackage{tikz-cd}

\theoremstyle{plain}
\newtheorem{theorem}{Theorem}[section]
\newtheorem*{theorem*}{Theorem}
\newtheorem{theoremX}{Theorem}

\newtheorem{prop}[theorem]{Proposition}
\newtheorem{corollary}[theorem]{Corollary}
\newtheorem{lemma}[theorem]{Lemma}
\theoremstyle{definition}
\newtheorem{defn}[theorem]{Definition}

\newtheorem{remark}[theorem]{Remark}

\DeclareMathOperator{\im}{Im}

\newcommand{\setsubjclass}[1]{\def\thesubjclass{#1}}
\newcommand{\setkeywords}[1]{\def\thekeywords{#1}}
\newcommand{\printclassification}{%
    \renewcommand{\thefootnote}{}%
    \footnotetext{\textbf{2020 Mathematics Subject Classification.} \thesubjclass.}%
    \footnotetext{\textbf{Key words and phrases.} \thekeywords.}%
    \renewcommand{\thefootnote}{\arabic{footnote}}%
}

\title{Stability of the Injectivity Radius and the Cut Locus of Submanifolds under Perturbations}

\author{
    Aritra Bhowmick \thanks{Kerala School of Mathematics, Kozhikode, Kerala, India.
		\href{mailto:avowmix@gmail.com}{\texttt{avowmix@gmail.com}}, 
		\href{mailto:aritra@ksom.res.in}{\texttt{aritra@ksom.res.in}}}
}

\setsubjclass{Primary: 53C22; Secondary: 53C20}
\setkeywords{cut locus; Hausdorff stability; injectivity radius; cut time; geodesic loops; focal points}

\begin{document}
\allowdisplaybreaks
\frenchspacing
\date{\today}
\maketitle
\printclassification

\begin{abstract}
    The continuity of the injectivity radius of a compact manifold under $C^2$ perturbation of the Riemannian metric was originally proved by P. Ehrlich (\emph{Composito Math.}, 1974), and later the proof was simplified by T. Sakai (\emph{Math. J. Okayama Univ.}, 1983). Using this continuity, jointly with J. Itoh and S. Prasad (\emph{J. Math. Anal. Appl.}, 2025), we proved the Hausdorff stability of the cut locus of a point, when both the point and the metric are perturbed. In the present article, we extend both these results to submanifolds. We first show that the injectivity radius of a submanifold depends continuously on the metric. Then, we obtain the Hausdorff stability of the cut locus of the submanifold, under $C^2$ perturbation of the metric. In fact, we allow the submanifold to be perturbed in the Whitney $C^2$ sense as well. 
\end{abstract}

\input{sec_intro}

\input{sec_injectivity_radius}

\input{sec_cut_locus}

\section*{Acknowledgements}
The author would like to thank J. Itoh and S. Prasad for collaborations that motivated the present work, and Prasun Roychowdhury for many stimulating discussions.

\bibliographystyle{alphaurl}
\bibliography{ref}

\end{document}

%% file: sec_intro.tex
\section{Introduction}
A Riemannian metric $\mathfrak{g}$ on a manifold $M$ naturally induces a distance $d_{\mathfrak{g}}(\_ , \_)$, making $(M, d_{\mathfrak{g}})$ into a metric space. An arc-length parametrized geodesic in $M$ is always \emph{locally} distance-minimizing. Assuming completeness, it follows from the Hopf-Rinow theorem that given any two points in $M$, there is always a geodesic joining them, whose length equals the distance between them. More generally, for any closed submanifold $N \subset M$ and some $q \in M$, there is a unit-speed geodesic, called an \emph{$N$-segment}, joining some point in $N$ to $q$, such that its length equals the distance $d_{\mathfrak{g}}(N, q)$ from $N$ to $q$. We say $q$ is a \emph{cut point} of $N$ if any extension of an $N$-segment fails to be distance-minimizing from $N$. The \emph{cut locus} $\mathrm{Cut}(N, \mathfrak{g})$ is the collection of all cut points of $N$, which evidently depends on the metric $\mathfrak{g}$. A point $q \in M$ is a \emph{separating point} of $N$ if there exist at least two distinct $N$-segments joining $N$ to $q$; the \emph{separating set} $\mathrm{Sep}(N, \mathfrak{g})$ is the collection of all separating points of $N$. It is known that $\mathrm{Cut}(N, \mathfrak{g}) = \overline{\mathrm{Sep}(N, \mathfrak{g})}$, and thus, the cut locus is a closed set. Moreover, the cut locus is disjoint from $N$. The distance $d_{\mathfrak{g}}(N, \mathrm{Cut}(N, \mathfrak{g}))$ is called the \emph{injectivity radius} of $N$ with respect to $\mathfrak{g}$, and we denote it by $\mathrm{Inj}(N, \mathfrak{g})$. In particular, when $N$ is a single point, say, $p \in M$, we denote it by $\mathrm{Inj}(p, \mathfrak{g})$. The \emph{injectivity radius} $\mathrm{Inj}(\mathfrak{g})$ of the metric is defined as the infimum of all the injectivity radii $\mathrm{Inj}(p, \mathfrak{g})$, as $p \in M$ varies.

The stability question of the cut locus has been addressed in the literature from different viewpoints \cite{buchnerCutLocusStabilityDim6,stabilityMedialAxis,albanoCutLocusStability,cutLocusStabilityCLT}. In \cite{bhowmickItohPrasadCutLocusStability}, we adapted the approach of \cite{albanoCutLocusStability}, and proved that the cut locus $\mathrm{Cut}(p, \mathfrak{g})$ of a point $p \in (M, \mathfrak{g})$ is stable in the sense of Hausdorff distance under $C^2$-perturbation of the metric $\mathfrak{g}$. A crucial ingredient in the proof was the continuity of the injectivity radius $\mathrm{Inj}(\mathfrak{g})$ as $\mathfrak{g}$ varies \cite{ehrlichInjectivityCont,sakaiInjectivityCont}. The primary goal of this article is to generalize these results to submanifolds. In particular, we first show the following.

\begin{theoremX}[\autoref{thm:injectivityRadiusContinuous}]\label{thmX:injectivityRadiusSubmanifold}
    Given a closed embedded submanifold $N \subset M$ in a compact manifold $M$, the assignment $\mathfrak{g} \mapsto \mathrm{Inj}(N, \mathfrak{g})$ is continuous, where we have the $C^2$-topology on the space of all Riemannian metrics on $M$.
\end{theoremX}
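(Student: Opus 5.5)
We will prove continuity by establishing upper and lower semicontinuity separately. The main tool is the Klingenberg-type characterization of $\mathrm{Inj}(N,\mathfrak{g})$ for a compact submanifold: \[ \mathrm{Inj}(N,\mathfrak{g}) = \min\Big\{ \mathrm{Foc}(N,\mathfrak{g}),\ \tfrac12 \ell(N,\mathfrak{g}) \Big\}. \] Here $\mathrm{Foc}(N,\mathfrak{g})$ is the distance to the first focal point along unit normal geodesics. The quantity $\ell(N,\mathfrak{g})$ is the least length of an $N$-geodesic loop, that is, a geodesic leaving $N$ orthogonally and returning to $N$ orthogonally. To justify this, take a point $q \in \mathrm{Cut}(N,\mathfrak{g})$ realizing $d(N,q)=\mathrm{Inj}(N,\mathfrak{g})$; it exists because $N$ and $M$ are compact. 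Either $q$ is the first focal point along an $N$-segment, or there are exactly two $N$-segments to $q$. In the second case, minimality of $d(N,\cdot)$ on the cut locus forces the two segments to fit together at $q$ with opposite velocities. Otherwise a first-variation argument moves $q$ along the cut locus, approximated by separating points, in a direction decreasing the distance. This yields an $N$-geodesic loop of length $2\,\mathrm{Inj}$. Conversely, every focal point and every midpoint of an $N$-geodesic loop lies at distance at least $\mathrm{Inj}$ from $N$.

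\textbf{Lower semicontinuity.} Suppose $\mathfrak{g}_k \to \mathfrak{g}$ in $C^2$ and $\mathrm{Inj}(N,\mathfrak{g}_k)$ converges to some $r < \mathrm{Inj}(N,\mathfrak{g})$. By the characterization, for each $k$ either there is a focal point at normal distance $t_k$, or there is an $N$-geodesic loop of length $2t_k$, with $t_k \to r$. Normal geodesics depend on the Christoffel symbols, hence on the $C^1$ jet of the metric. $N$-Jacobi fields depend on the curvature and on the second fundamental form of $N$, hence on the $C^2$ jet. The unit normal bundle of $\mathfrak{g}_k$ converges to that of $\mathfrak{g}$. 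Passing to a subsequence, unit normal vectors $v_k \to v$ give geodesics $\gamma_{v_k} \to \gamma_v$ in $C^1$ on compact time intervals.

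Focal points are limits of focal points: degeneracy of the normal exponential map is a closed condition, since the differential depends continuously on $(\mathfrak{g},v,t)$ in the $C^2$ topology. Hence $\gamma_v(r)$ is focal, possibly with $r=0$ excluded by a uniform lower bound from bounded curvature and second fundamental form. Limits of $N$-geodesic loops are $N$-geodesic loops of length $2r$, provided $r>0$. Orthogonality at both endpoints passes to the limit. Either case gives $\mathrm{Inj}(N,\mathfrak{g}) \le r$, a contradiction.

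A uniform positive lower bound is needed to rule out degenerate loops collapsing to a point of $N$. It comes from a uniform tubular neighbourhood: the normal exponential map is a diffeomorphism on a neighbourhood of radius $\epsilon$ for all metrics $C^2$-close to $\mathfrak{g}$, by the inverse function theorem with parameters together with compactness of $N$. This shows $\mathrm{Inj}(N,\mathfrak{g}_k) \ge \epsilon$.

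\textbf{Upper semicontinuity.} This is the step I expect to be the main obstacle, since focal points and loops of $\mathfrak{g}$ need not persist under perturbation; a loop may be destroyed. I would avoid the characterization here and argue directly with the cut locus. Since $\mathrm{Cut}(N,\mathfrak{g})=\overline{\mathrm{Sep}(N,\mathfrak{g})}$, pick a separating point $q$ with $d_{\mathfrak{g}}(N,q) < \mathrm{Inj}(N,\mathfrak{g})+\delta$. Then $q$ has two distinct $N$-segments, with initial vectors $v_1 \neq v_2$.

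Suppose, for contradiction, that $\mathrm{Inj}(N,\mathfrak{g}_k) > \mathrm{Inj}(N,\mathfrak{g})+2\delta$ along a subsequence. Then the $\mathfrak{g}_k$-normal exponential map $E_k$ is a diffeomorphism from the open normal disk bundle of radius $\mathrm{Inj}(N,\mathfrak{g})+2\delta$ onto its image. Moreover, $E_k^{-1}$ computes the unique $\mathfrak{g}_k$-segment. By the uniform convergence $E_k \to E$ and $d_{\mathfrak{g}_k}\to d_{\mathfrak{g}}$, the unique $\mathfrak{g}_k$-segments to $q$ subconverge to a $\mathfrak{g}$-segment to $q$.

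To get the contradiction, I would use that $E_k$ is injective on the region while $E$ maps the two distinct points $t v_1$ and $t v_2$, with $t = d_{\mathfrak{g}}(N,q)$, to the same point $q$. Suppose $d E$ is invertible at $t v_1$ and at $t v_2$; this holds if $q$ is not focal, and separating points are generically non-focal, so $q$ can be chosen that way. Then a degree/inverse-function argument with parameters shows that, for large $k$, $E_k$ still identifies two nearby points, contradicting injectivity. If $q$ is focal along $v_1$, then nearby points $q'$ still have focal points within distance $\delta$ for $\mathfrak{g}_k$, by continuity of Jacobi fields. This contradicts $E_k$ being a local diffeomorphism there. Hence $\limsup_k \mathrm{Inj}(N,\mathfrak{g}_k) \le \mathrm{Inj}(N,\mathfrak{g})+2\delta$, and letting $\delta \to 0$ completes the proof.
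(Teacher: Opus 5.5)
There is a genuine gap in your upper-semicontinuity half, in the focal case. The lower-semicontinuity half matches the paper's Lemma~\ref{lemma:injectivityRadiusLimInf}: the focal/loop characterization, closedness of both conditions under limits, and a uniform lower bound to exclude $r=0$. Your uniform tubular neighbourhood is a fine substitute for the paper's Warner estimate plus Proposition~\ref{prop:convergenceConsequences}~(1).

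\textbf{The focal case cannot be avoided.} Your claim that $q$ can be chosen non-focal is false in general. Take $N$ a point, or a totally geodesic equator, in a round sphere: the cut locus is the antipode, resp.\ the two poles, and every separating point is focal along every $N$-segment.

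\textbf{Persistence of focal points is not justified.} Everything therefore rests on your assertion that focal points of $\mathfrak{g}$ persist for $\mathfrak{g}_k$ ``by continuity of Jacobi fields''. Continuity only shows that the focal condition is \emph{closed}: limits of focal points are focal, which you correctly used in the other half. It does not show the condition is open. When the focal multiplicity is even (e.g.\ the antipode of a point in round $S^3$, multiplicity $2$), the determinant of the Jacobi tensor touches zero without changing sign, so no intermediate-value argument is available. Persistence is true, but it needs the Morse index theorem for $N$-geodesics: the index form on $[0,t+\delta]$ has a negative direction, and this survives small perturbations of the form. That ingredient is missing from your sketch.

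\textbf{The fix is simpler.} None of this is needed. The direction you flagged as the main obstacle is the easy one, because being an $N$-segment is a closed condition. This is the paper's Lemma~\ref{lemma:injectivityRadiusLimSup}. Suppose $R_k=\mathrm{Inj}(N,\mathfrak{g}_k)\to R$ and $\mathbf{n}$ is any $\mathfrak{g}$-unit normal, and choose $\mathfrak{g}_k$-unit normals $\mathbf{n}_k\to\mathbf{n}$. Then $d_{\mathfrak{g}_k}(N,\gamma^{\mathfrak{g}_k}_{\mathbf{n}_k}(R_k))=R_k$. Convergence of geodesics (already for $C^1$-convergent metrics) and the uniform convergence $d_{\mathfrak{g}_k}(N,\cdot)\to d_{\mathfrak{g}}(N,\cdot)$ give $d_{\mathfrak{g}}(N,\gamma^{\mathfrak{g}}_{\mathbf{n}}(R))=R$. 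Hence $\rho_{\mathfrak{g}}(N,\mathbf{n})\ge R$, and so $\mathrm{Inj}(N,\mathfrak{g})\ge\limsup_k\mathrm{Inj}(N,\mathfrak{g}_k)$.

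In your own setup this disposes of both cases at once. If $\mathrm{Inj}(N,\mathfrak{g}_k)>\mathrm{Inj}(N,\mathfrak{g})+2\delta$, the $\mathfrak{g}_k$-geodesics with initial vectors $\mathbf{v}_k\to\mathbf{v}_1$ minimize on $[0,\mathrm{Inj}(N,\mathfrak{g})+2\delta]$. Then $\gamma_{\mathbf{v}_1}$ would minimize beyond the separating point $q$, which is impossible. Your non-focal double-point argument is correct, but superfluous.
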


Using this, we get the following stability result for cut locus.

\begin{theoremX}[\autoref{thm:stabilityOfCutLocus}]\label{thmX:stabilityOfCutLocus}
    Let $N \subset M$ be a closed submanifold of a compact manifold $M$. Suppose $\mathfrak{g}_i$ is a sequence of Riemannian metrics on $M$, converging to the metric $\mathfrak{g}$ in the $C^2$-topology. Then, $\lim_i d_{\mathsf{H}}(\mathrm{Cut}(N, \mathfrak{g}_i), \mathrm{Cut}(N, \mathfrak{g})) = 0$, where $d_{\mathsf{H}}(\_, \_)$ is the Hausdorff distance in the space of closed subsets of $M$.
\end{theoremX}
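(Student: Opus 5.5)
The statement is a Hausdorff convergence in the compact space $M$, so it is equivalent to Kuratowski convergence. I would therefore prove two inclusions separately:
\begin{enumerate}[(a)]
\item \emph{Upper semicontinuity:} if $q_i \in \mathrm{Cut}(N, \mathfrak{g}_i)$ and $q_i \to q$ along a subsequence, then $q \in \mathrm{Cut}(N, \mathfrak{g})$.
\item \emph{Lower semicontinuity:} every $q \in \mathrm{Cut}(N, \mathfrak{g})$ is a limit of points $q_i \in \mathrm{Cut}(N, \mathfrak{g}_i)$.
\end{enumerate}
In both parts I use $\mathrm{Cut} = \overline{\mathrm{Sep}}$ to reduce to separating points. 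The analytic inputs are the following.
\begin{itemize}
\item Since $\mathfrak{g}_i \to \mathfrak{g}$ in $C^2$, the Christoffel symbols converge in $C^1$. Hence the geodesic flows, and the normal exponential maps $\exp^\perp_{\mathfrak{g}_i}$ on the unit normal bundles, converge in $C^1$ on compact sets.
\item The distance functions satisfy $d_{\mathfrak{g}_i}(N,\cdot) \to d_{\mathfrak{g}}(N,\cdot)$ uniformly.
\item By \autoref{thmX:injectivityRadiusSubmanifold}, $\mathrm{Inj}(N,\mathfrak{g}_i) \to \mathrm{Inj}(N,\mathfrak{g}) > 0$. So all the cut loci stay a uniform distance away from $N$, and cut points have distance to $N$ bounded below uniformly in $i$.
\end{itemize}

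For (a), first perturb each $q_i$ within $1/i$ so that $q_i \in \mathrm{Sep}(N,\mathfrak{g}_i)$. Choose two distinct $\mathfrak{g}_i$-$N$-segments to $q_i$, with initial unit normal vectors $u_i \neq w_i$ and common length $t_i = d_{\mathfrak{g}_i}(N,q_i)$. The uniform lower bound gives $t_i \to t = d_{\mathfrak{g}}(N,q) > 0$. Using compactness of the unit normal bundle, pass to limits $u_i \to u$ and $w_i \to w$. By $C^1$ convergence of the flows, the limit geodesics are $\mathfrak{g}$-$N$-segments to $q$.
\begin{itemize}
\item If $u \neq w$, then $q \in \mathrm{Sep}(N,\mathfrak{g})$.
\item If $u = w$ and $q$ were not a cut point, then $q$ is neither a cut point nor a focal point along that segment. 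So $\exp^\perp_{\mathfrak{g}}$ is a local diffeomorphism at $(u,t)$.
\end{itemize}
In the second case, a parametric inverse function theorem applies, since $\exp^\perp_{\mathfrak{g}_i} \to \exp^\perp_{\mathfrak{g}}$ in $C^1$. It shows that $\exp^\perp_{\mathfrak{g}_i}$ is injective on a fixed neighbourhood of $(u,t)$ for all large $i$. This contradicts the two distinct preimages $(u_i,t_i) \neq (w_i,t_i)$ of $q_i$ converging to $(u,t)$. Hence $q \in \mathrm{Cut}(N,\mathfrak{g})$.

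For (b), which I expect to be the main obstacle, again assume $q \in \mathrm{Sep}(N,\mathfrak{g})$. Suppose, for contradiction, that along a subsequence $B_{\mathfrak{g}}(q,\varepsilon) \cap \mathrm{Cut}(N,\mathfrak{g}_i) = \emptyset$. Shrinking $\varepsilon$, the ball also misses $N$. Then $f_i := d_{\mathfrak{g}_i}(N,\cdot)$ is smooth on the ball. Moreover, every $\mathfrak{g}_i$-segment ending at a point $x \in B(q,\varepsilon/2)$ extends minimizingly past $x$ by roughly $\varepsilon/2$; it is also available on the $N$ side for that length.

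I would turn this into uniform two-sided support functions at each $x \in B(q,\varepsilon/4)$, with $z$ and $y$ the points on the extended segment at distance $\varepsilon/4$ before and after $x$:
\begin{itemize}
\item an upper support $f_i(\cdot) \le f_i(z) + d_{\mathfrak{g}_i}(z,\cdot)$;
\item a lower support $f_i(\cdot) \ge f_i(y) - d_{\mathfrak{g}_i}(\cdot,y)$.
\end{itemize}
Both supports are smooth near $x$, because $x$ is at distance $\varepsilon/4$ from $z$ and $y$. By continuity of the injectivity radius of metrics (Ehrlich--Sakai), $\varepsilon/4$ is below $\mathrm{Inj}(\mathfrak{g}_i)$ uniformly for large $i$. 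Together with $C^2$ convergence, this gives Hessian bounds on the supports that are uniform in $i$ and $x$.

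Thus the $f_i$ are uniformly semiconcave and semiconvex, hence uniformly $C^{1,1}$ on $B(q,\varepsilon/4)$. By Arzel\`a--Ascoli, $\nabla f_i$ converges uniformly, so the uniform limit $f = d_{\mathfrak{g}}(N,\cdot)$ is $C^1$ near $q$. This is impossible at a separating point: the two distinct $\mathfrak{g}$-segments to $q$ give two distinct unit vectors, and both are supergradients of $f$ at $q$. Combining (a) and (b) yields $d_{\mathsf{H}}(\mathrm{Cut}(N,\mathfrak{g}_i),\mathrm{Cut}(N,\mathfrak{g})) \to 0$.
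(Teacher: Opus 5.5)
Your proposal is correct. Part (a) is essentially the paper's \autoref{lemma:conv_2}: you take two distinct segments to separating points $q_i$, note that their limits must coincide if $q$ is not a cut point, and then use the $C^1$-stability of local injectivity from \autoref{prop:convergenceConsequences}~(1). The one detail you gloss over is that the maps $\exp^\perp_{\mathfrak{g}_i}$ live on different bundles $\nu(N,\mathfrak{g}_i)$. The paper handles this with the bundle isomorphisms $\mathcal{L}_i$; for fixed $N$, $\mathfrak{g}_i$-orthogonal projection $\nu(N,\mathfrak{g})\to\nu(N,\mathfrak{g}_i)$ suffices.

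Part (b) is a genuinely different argument from the paper's \autoref{lemma:conv_1}. The paper argues constructively:
\begin{itemize}
\item it uses \autoref{thm:injectivityRadiusContinuous} to fix a uniform $\delta<\mathrm{Inj}(N_i,\mathfrak{g}_i)$;
\item it replaces $N_i$ by the smooth hypersurface $\partial\Omega_i=\{d_{\mathfrak{g}_i}(N_i,\cdot)=\delta\}$;
\item it applies the viscosity characterization of separating points (\autoref{prop:separatingPointCharacterization}): a $C^1$ test function $\phi$ touching $u_0$ from above at $x_0$ with $\lVert d_{x_0}\phi\rVert_{\mathfrak{g}_0}<1$ also touches $u_i$ from above at nearby points $x_i$, which are therefore $\mathfrak{g}_i$-separating points.
\end{itemize}

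You argue by contradiction instead. A ball free of $\mathfrak{g}_i$-cut points makes $f_i=d_{\mathfrak{g}_i}(N,\cdot)$ smooth there. The touching functions $f_i(z)+d_{\mathfrak{g}_i}(z,\cdot)$ from above and $f_i(y)-d_{\mathfrak{g}_i}(\cdot,y)$ from below, together with Hessian comparison, give a two-sided Hessian bound uniform in $i$. So the uniform limit is $C^1$ at a separating point, which is impossible.

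Your route avoids viscosity theory entirely. In (b) it also does not need the continuity of $\mathrm{Inj}(N,\cdot)$: the ball stays away from $N$ simply because $q\notin N$ and $f_i\to f$ uniformly. It needs only uniform curvature bounds and a uniform lower bound on the ambient injectivity radius (Ehrlich--Sakai). The paper's route, in exchange, produces the approximating separating points explicitly. It is also a direct transplant of the point case in \cite{bhowmickItohPrasadCutLocusStability}, reusing machinery already available there.
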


In fact, we prove a more general version, where the embedding of $N \hookrightarrow M$ is also perturbed in the Whitney $C^2$ sense.

%% file: sec_injectivity_radius.tex
\section{Stability of the Injectivity Radius of Embedded Submanifold}
Throughout this section, $M$ is a compact manifold without boundary. Let $N \subset M$ be a closed, connected, embedded submanifold without boundary, and $\mathfrak{h}$ be a fixed Riemannian metric on $M$. Denote the \emph{normal bundle} of $N$ by $\nu(N, \mathfrak{h})$, and the \emph{unit} normal bundle by $\nu^1(N, \mathfrak{h})$. The \emph{normal exponential map} $\exp^\nu_{\mathfrak{h}} : \nu(N, \mathfrak{h}) \rightarrow M$ is defined as the restriction of the exponential map $\exp_{\mathfrak{h}} : TM \rightarrow M$ to the normal bundle $\nu(N, \mathfrak{h})$. In particular, for any $\mathbf{v} \in \nu(N, \mathfrak{h})$ we have
\begin{equation}\label{eq:normalExponential}
    \exp^\nu_{\mathfrak{h}}(\mathbf{v}) \coloneqq \gamma_{\mathbf{v}}^{\mathfrak{h}}(1),
\end{equation}
where $\gamma_{\mathbf{v}}^{\mathfrak{h}} : [0, \infty) \rightarrow M$ is the unique geodesic with respect to $\mathfrak{h}$, with initial velocity $\mathbf{v}$. A geodesic with initial velocity in $\nu(N, \mathfrak{h})$ is called an \emph{$N$-geodesic}. If $\mathbf{v}$ is a critical point of $\exp^\nu_{\mathfrak{h}}$, then we say it is a \emph{tangential focal point} of $N$; the critical value $\exp^\nu_{\mathfrak{h}}(\mathbf{v}) \in M$ is called a \emph{focal point} of $N$ along the geodesic $\gamma_{\mathbf{v}}^{\mathfrak{h}}$. For $\mathbf{n} \in \nu^1(N, \mathfrak{h})$, the \emph{cut time} is defined as 
\begin{equation}\label{eq:cutTime}
    \rho_{\mathfrak{h}}(N, \mathbf{n}) \coloneqq \sup \left\{ t \;\middle|\; d_{\mathfrak{h}} \left( N, \gamma_{\mathbf{n}}^{\mathfrak{h}}(t) \right) = t \right\}.
\end{equation}
The \emph{tangential cut locus} of $N$ is then defined as 
\begin{equation}\label{eq:tangentialCutLocus}
    \widetilde{\mathrm{Cut}}(N, \mathfrak{h}) \coloneqq \left\{  \rho_{\mathfrak{h}}(N, \mathbf{n}) \mathbf{n} \;\middle|\; \mathbf{n} \in \nu^1(N, \mathfrak{h}) \right\}.
\end{equation}
The \emph{cut locus} $\mathrm{Cut}(N, \mathfrak{h})$ of $N$ is the image of $\widetilde{\mathrm{Cut}}(N, \mathfrak{h})$ under the normal exponential map. In other words,
\[\mathrm{Cut}(N, \mathfrak{h}) \coloneqq \left\{ q \in M \;\middle|\; \substack{\text{There exists an $N$-segment (with respect to $\mathfrak{h}$) joining $N$ to $q$,} \\ \text{any extension of which fails to be distance minimizing from $N$} }\right\},\]
where an \emph{$N$-segment} is a unit speed $N$-geodesic, which minimizes the distance from $N$. The \emph{separating set} of $N$, the collection of all \emph{separating points}, is denoted as 
\[\mathrm{Sep}(N, \mathfrak{h}) \coloneqq \left\{ q \in M \;\middle|\; \substack{\text{There exists at least two distinct $N$-segments, }\\\text{with respect to $\mathfrak{h}$, joining $N$ to $q$.}} \right\}\]
From \cite{basuPrasadCutLocus}, it follows that 
\begin{equation}\label{eq:cutLocusSepSet}
    \mathrm{Cut}(N, \mathfrak{h}) = \overline{\mathrm{Sep}(N, \mathfrak{h})}.
\end{equation}
We have the (non-exclusive) dichotomy: a point in the cut locus $\mathrm{Cut}(N, \mathfrak{h})$ is either a separating point, or it is the first focal point of $N$ along a unit-speed $N$-geodesic.

\begin{defn}\label{defn:injectivityRadius}
    The \emph{injectivity radius} of $N$ with respect to $\mathfrak{h}$ is defined as
    \begin{equation}\label{eq:injectivityRadius}
        \mathrm{Inj}(N, \mathfrak{h}) \coloneqq d_{\mathfrak{h}}\left( N, \mathrm{Cut}(N, \mathfrak{h}) \right) = \inf \left\{ \rho_{\mathfrak{h}}\left(N, \mathbf{n} \right) \;\middle|\; \mathbf{n} \in \nu^1(N, \mathfrak{h}) \right\}.
    \end{equation} 
\end{defn}

Since $N$ is compact, it follows that $\mathrm{Inj}(N, \mathfrak{h}) > 0$ \cite[Theorem 5.25]{leeRimenannianManifoldsBook}. Let us now give a characterization of injectivity radius. Recall first the definition of an $N$-geodesic loop.

\begin{defn}\label{defn:NGeodesicLoop} \cite{bhowmickPrasadFinslerCutLocus}
    A unit-speed $N$-geodesic $\gamma : [0, \ell] \rightarrow M$ is called an \emph{$N$-geodesic loop} if $\gamma(\ell) \in N$ and $\dot \gamma(\ell) \in \nu^1(N, \mathfrak{h})$.
\end{defn}

We have the following characterization, reminiscent of the classical description of the injectivity radius of a point in terms of conjugate points and geodesic loops \cite[Proposition 4.13]{sakaiBook}.

\begin{lemma}\label{lemma:injectivityRadiusCharacterization}
    $\mathrm{Inj}(N, \mathfrak{h})$ is the minimum of the following two quantities.
    \begin{itemize}
        \item $f_{\text{min}}$ : Infimum of the distances to the first focal point of $N$ along a unit-speed $N$-geodesic. If there is no focal point in any normal direction, then set $f_{\text{min}} = \infty$.
        \item $\ell_{\frac{1}{2}}$ : Half the infimum of the lengths of $N$-geodesic loops. If no such loop exists, set $\ell_{\frac{1}{2}} = \infty$.
    \end{itemize}
\end{lemma}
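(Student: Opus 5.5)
We set $i \coloneqq \mathrm{Inj}(N, \mathfrak{h})$ and prove the two inequalities $i \le \min\{f_{\text{min}}, \ell_{\frac{1}{2}}\}$ and $i \ge \min\{f_{\text{min}}, \ell_{\frac{1}{2}}\}$ separately. For the first, two standard facts suffice. First, an $N$-geodesic does not minimize the distance from $N$ past its first focal point. Hence $\rho_{\mathfrak{h}}(N, \mathbf{n})$ is at most the first focal time along $\gamma^{\mathfrak{h}}_{\mathbf{n}}$, which gives $i \le f_{\text{min}}$. Second, let $\gamma : [0, \ell] \to M$ be an $N$-geodesic loop. Both $\gamma$ and its reverse $\bar\gamma(t) = \gamma(\ell - t)$ are unit-speed $N$-geodesics, since $\dot\gamma(\ell) \in \nu^1(N,\mathfrak{h})$. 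They reach the midpoint $q = \gamma(\ell/2)$ at time $\ell/2$. Suppose $\ell/2 < i$. Then both would be $N$-segments to $q$, because they are defined up to times below their cut times. They are distinct, since they start at different initial velocities: if $\dot\gamma(0) = -\dot\gamma(\ell)$ with $\gamma(0)=\gamma(\ell)$, uniqueness of geodesics would force $\gamma(t) = \gamma(\ell - t)$, and so $\dot\gamma(\ell/2) = 0$, which is impossible. So $q \in \mathrm{Sep}(N, \mathfrak{h}) \subset \mathrm{Cut}(N,\mathfrak{h})$ with $d_{\mathfrak{h}}(N,q) \le \ell/2 < i$, a contradiction. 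Taking the infimum over loops gives $i \le \ell_{\frac{1}{2}}$.

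For the reverse inequality, we first show that the infimum in \eqref{eq:injectivityRadius} is attained. The natural route is lower semicontinuity of $\mathbf{n} \mapsto \rho_{\mathfrak{h}}(N, \mathbf{n})$, which is in fact continuous by the standard argument, together with compactness of $\nu^1(N, \mathfrak{h})$. Alternatively, we can use that $\mathrm{Cut}(N, \mathfrak{h})$ is closed by \eqref{eq:cutLocusSepSet} and that $M$ is compact, so some $q \in \mathrm{Cut}(N, \mathfrak{h})$ realizes $d_{\mathfrak{h}}(N, q) = i$. By the dichotomy, either $q$ is the first focal point along an $N$-segment of length $i$, and then $f_{\text{min}} \le i$, or $q$ is a separating point. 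The separating case is not automatic, because we need $q$ itself to be separating and not merely a limit of separating points. To handle this, we argue directly: if $q$ is not a first focal point along any $N$-segment, then $q$ lies in the closure of $\mathrm{Sep}$, and a limiting argument on pairs of distinct $N$-segments gives two distinct $N$-segments to $q$. These limits remain distinct, because otherwise $q$ would be a focal point: the normal exponential map would fail to be injective near a non-critical point, contradicting the inverse function theorem.

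So assume $q$ is a separating point but not a first focal point along any segment, with two distinct $N$-segments $\gamma_1, \gamma_2$ of length $i$ ending at $q$. We claim $\dot\gamma_1(i) = -\dot\gamma_2(i)$, which makes $\gamma_1$ followed by the reverse of $\gamma_2$ an $N$-geodesic loop of length $2i$, so $\ell_{\frac{1}{2}} \le i$. This is the main obstacle. We argue by contradiction, as in Klingenberg's lemma. If the velocities are not opposite, choose $w \in T_qM$ with $\langle w, \dot\gamma_1(i)\rangle < 0$ and $\langle w, \dot\gamma_2(i)\rangle < 0$. Near each $\mathbf{v}_j = i\,\dot\gamma_j(0)$, the normal exponential map is a local diffeomorphism, since neither endpoint is focal. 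So for points $q_s = \exp_q(sw)$ there are nearby normal vectors $\mathbf{v}_j(s)$ with $\exp^\nu_{\mathfrak{h}}(\mathbf{v}_j(s)) = q_s$. By the first variation formula, $|\mathbf{v}_j(s)| = i + s\langle w, \dot\gamma_j(i)\rangle + o(s) < i$ for small $s > 0$. Hence $d_{\mathfrak{h}}(N,q_s) < i$. Because $\mathbf{v}_1(s) \neq \mathbf{v}_2(s)$, the point $q_s$ is reached by two distinct $N$-geodesics of the same length. If both are minimizing, $q_s$ is a separating point closer than $i$. If not, the $N$-segment to $q_s$ crosses a cut point at distance less than $i$. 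Either way this contradicts the minimality of $i$. The delicate parts are making the last step rigorous, namely showing that the $N$-geodesics $t\mapsto \exp^\nu(t\mathbf{v}_j(s)/|\mathbf{v}_j(s)|)$ still have cut time at least $|\mathbf{v}_j(s)|$, and ensuring that the two lengths coincide; we expect both to follow by adapting the proof of \cite[Proposition 4.13]{sakaiBook}.
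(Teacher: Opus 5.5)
Your proof is correct, and its skeleton matches the paper's. You obtain $\mathrm{Inj}(N,\mathfrak{h}) \le \min\{f_{\text{min}}, \ell_{\frac12}\}$ exactly as the paper does; you additionally justify why $\gamma$ and its reverse are distinct, which the paper leaves implicit. The reverse inequality likewise goes through a nearest cut point $q$ and the separating/focal dichotomy.

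The real difference is the key step. The paper invokes \cite[Theorem 4.16]{bhowmickPrasadFinslerCutLocus} to conclude that a nearest non-focal cut point is the midpoint of an $N$-geodesic loop. You prove this directly with a Klingenberg-type first-variation argument, and you also re-derive the dichotomy from $\mathrm{Cut}(N,\mathfrak{h}) = \overline{\mathrm{Sep}(N,\mathfrak{h})}$ and the inverse function theorem. The paper's route is shorter; yours is self-contained and shows concretely why the terminal velocities must be opposite.

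The ``delicate parts'' you defer to an adaptation of Sakai are immediate and need no adaptation. Since $i = \inf_{\mathbf{n}} \rho_{\mathfrak{h}}(N,\mathbf{n})$, every unit-speed $N$-geodesic minimizes the distance from $N$ up to any time below $i$. Hence both $N$-geodesics with initial vectors $\mathbf{v}_j(s)/|\mathbf{v}_j(s)|$ are $N$-segments up to time $|\mathbf{v}_j(s)| < i$. Consequently $|\mathbf{v}_1(s)| = d_{\mathfrak{h}}(N,q_s) = |\mathbf{v}_2(s)|$ automatically. Since $\mathbf{v}_1(s) \neq \mathbf{v}_2(s)$, this gives $q_s \in \mathrm{Sep}(N,\mathfrak{h})$ with $d_{\mathfrak{h}}(N,q_s) < i$, which is the contradiction.

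In particular, the ``not both minimizing'' branch of your case split never occurs. You should also assert that the two geodesics have ``the same length'' only as a consequence of this observation, not before it as you currently do.
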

\begin{proof}
    Since a unit-speed $N$-geodesic cannot be distance-minimizing from $N$ after a focal point of $N$ \cite[Lemma 2.11, pg 96]{sakaiBook}, it follows that $\mathrm{Inj}(N, \mathfrak{h}) \le f_{\text{min}}$. Next, assume $\ell_{\frac{1}{2}} < \infty$, and suppose $\gamma : [0, 2\ell] \rightarrow M$ is an $N$-geodesic loop. If $\mathrm{Inj}(N, \mathfrak{h}) > \ell$, then $\gamma(\ell) \not \in \mathrm{Cut}(N, \mathfrak{h})$, and $\gamma|_{[0, \ell]}$ is an $N$-segment. But then the reversed curve $\eta(t) = \gamma(2 \ell - t)$ is a distinct $N$-segment for $0 \le t \le \ell$ joining $N$ to $\gamma(\ell) = \eta(\ell)$. This implies $\gamma(\ell) \in \mathrm{Cut}(N, \mathfrak{h})$, a contradiction. Hence, we get $\mathrm{Inj}(N, \mathfrak{h}) \le \min \left\{ f_{\text{min}}, \, \ell_{\frac{1}{2}} \right\}$.
    
    If possible, suppose $\mathrm{Inj}(N, \mathfrak{h}) < \min\left\{ f_{\text{min}}, \, \ell_{\frac{1}{2}} \right\}$ (which is allowed to be $\infty$). Since $\mathrm{Cut}(N, \mathfrak{h})$ is a closed subset of the compact manifold $M$, there is a point $q \in \mathrm{Cut}(N, \mathfrak{h})$ such that $d(N, \_)|_{\mathrm{Cut}(N, \mathfrak{h})}$ attains a global minimum. Let $l \coloneqq d(N, q) = \mathrm{Inj}(N, \mathfrak{h})$. Since $l < f_{\text{min}}$, we have $q$ is \emph{not} a first focal point of $N$ along any $N$-geodesic. But then by \cite[Theorem 4.16]{bhowmickPrasadFinslerCutLocus}, the extension $\gamma : [0, 2l] \rightarrow M$ is an $N$-geodesic loop. This leads to the contradiction $l < \ell_{\frac{1}{2}} \le l$. Hence, $\mathrm{Inj}(N, \mathfrak{h}) = \min\left\{ f_{\text{min}}, \, \ell_{\frac{1}{2}} \right\}$.
\end{proof}

\begin{remark}\label{rmk:injRadiusCharCompactness}
    Note that the above characterization remains true as long as $N$ is a closed submanifold of a complete Riemannian manifold $(M, \mathfrak{h})$, and the distance function $d(N, \_)|_{\mathrm{Cut}(N, \mathfrak{h})}$ attains global minima at some $q \in \mathrm{Cut}(N, \mathfrak{h})$.
\end{remark}

Next, we shall need the notion of principal curvatures of $N$. Given any $\mathbf{v} \in TM|_N$, we have the orthogonal splitting $\mathbf{v} = \mathbf{v}^{\perp_{\mathfrak{h}}} + \mathbf{v}^{\top_{\mathfrak{h}}} \in \nu(N, \mathfrak{h}) \oplus TN$. The \emph{second fundamental form} of a Riemannian metric $\mathfrak{h}$ is the symmetric tensor $\Pi^{N,\mathfrak{h}} : TN \odot TN \rightarrow \nu(N, \mathfrak{h})$ defined as 
\begin{equation}\label{eq:secondFundForm}
    \Pi^{N,\mathfrak{h}}(\mathbf{x}, \mathbf{y}) = - \left( \nabla^{\mathfrak{h}}_X Y \middle|_p \right)^{\perp_{\mathfrak{h}}}, \qquad \mathbf{x},\mathbf{y} \in T_p N,
\end{equation}
where $\nabla^{\mathfrak{h}}$ is the associated Levi-Civita connection, and $X, Y \in \Gamma TN$ are some arbitrary extensions of $\mathbf{x}, \mathbf{y}$ tangential to $N$. Then, given some $\mathbf{n} \in \nu(N, \mathfrak{h})$, the \emph{shape operator} of $N$ along $\mathbf{n}$ is the self-adjoint linear operator
\[\mathcal{S}_{\mathbf{n}}^{N, \mathfrak{h}} : T_p N \rightarrow T_p N,\]
defined via the relation
\begin{equation}\label{eq:shapeOperatorDefinition}
    \mathfrak{h} \left( \mathcal{S}^{N, \mathfrak{h}}_{\mathbf{n}} \mathbf{x}, \mathbf{y} \right) = \mathfrak{h} \left( \mathbf{n}, \Pi^{N, \mathfrak{h}} (\mathbf{x}, \mathbf{y}) \right), \quad \mathbf{x}, \mathbf{y} \in T_p N. 
\end{equation}
We have the identity, 
\begin{equation}\label{eq:shapeOperatorFormula}
    \mathcal{S}^{N, \mathfrak{h}}_{\mathbf{n}} \mathbf{x} = \left( \nabla_X^{\mathfrak{h}} \widetilde{\mathbf{n}} \middle|_p  \right)^{\top_{\mathfrak{h}}}, \quad \mathbf{x} \in T_p N,
\end{equation}
where $X \in \Gamma TN, \widetilde{\mathbf{n}} \in \Gamma \nu(N, \mathfrak{h})$ are some arbitrary extensions of $\mathbf{x}, \mathbf{n}$ respectively. The \emph{principal curvatures} of $N$ along $\mathbf{n}$ are the eigenvalues of $\mathcal{S}_{\mathbf{n}}^{N,\mathfrak{h}}$. Since $N$ is compact, it follows that the principal curvatures of $N$ along any unit normal vector are bounded.\\[1em]

For the remainder of this section, let $N$ be a fixed compact manifold, without boundary, which can be embedded in $M$. Fix the following data.
\begin{itemize}
    \item For $i \ge 0$, let $f_i : N \hookrightarrow M$ be smooth embeddings. Denote, $N_i \coloneqq f_i (N) \subset M$ as the embedded submanifolds.
    \item For $i \ge 0$, let $\mathfrak{g}_i$ be smooth Riemannian metrics. Denote the normal exponential maps 
    \begin{equation}\label{eq:normalExponentials}
        \mathcal{E}_i \coloneqq \exp_{\mathfrak{g}_i}|_{\nu(N_i, \mathfrak{g}_i)} : \nu(N_i, \mathfrak{g}_i) \rightarrow M.
    \end{equation}
\end{itemize}
In the sequel, we shall assume $f_i \xrightarrow{C^2} f_0$ and $\mathfrak{g}_i \xrightarrow{C^2} \mathfrak{g}_0$ in the (strong) Whitney topology. Recall, convergence in the Whitney $C^k$ topology is equivalent to the convergence of the $k$\textsuperscript{th}-jet, uniformly on compacts. We refer to \cite{guilleminGolubitskyBook,hirschBook} for details. Let us first observe a few consequences of convergence.

\begin{prop}\label{prop:convergenceConsequences}
    \begin{enumerate}[(1)]
        \item\label{prop:convergenceConsequences:injectivity} Suppose $f_i \xrightarrow{C^1} f_0$ and $\mathfrak{g}_i \xrightarrow{C^2} \mathfrak{g}_0$. Let $\mathbf{v}_i^1, \mathbf{v}_i^2 \in \nu(N_i, \mathfrak{g}_i)$, with $\mathcal{E}_i (\mathbf{v}_i^1) = \mathcal{E}_i (\mathbf{v}_i^2)$, such that $\mathbf{v}_i^1, \mathbf{v}_i^2 \rightarrow \mathbf{v}_0 \in \nu_p (N_0, \mathfrak{g}_0)$ for some $p \in N_0$. Then, $\mathbf{v}_0$ is a critical point of $\mathcal{E}_0$.
        \item\label{prop:convergenceConsequences:sectional} Suppose $f_i \xrightarrow{C^2} f_0$ and $\mathfrak{g}_i \xrightarrow{C^1} \mathfrak{g}_0$. Then there exists some $\Delta > 0$ such that the absolute principal curvatures of $N_i$ along any unit normal vector with respect to $\mathfrak{g}_i$ are bounded by $\Delta$.
        \item\label{prop:convergenceConsequences:distance} Suppose $f_i \xrightarrow{C^0} f_0$ and $\mathfrak{g}_i \xrightarrow{C^0} \mathfrak{g}_0$. Then, for any converging sequence of points $q_i \rightarrow q_0$ in $M$, we have $\lim d_{\mathfrak{g}_i} (N_i, q_i) = d_{\mathfrak{g}_0} (N_0, q_0)$. Moreover, denoting the distance $u_i \coloneqq d_{\mathfrak{g}_i} (N_i, \_ )$ as functions $M \rightarrow \mathbb{R}$, we have $u_i \xrightarrow{C^0} u_0$. 
    \end{enumerate}
\end{prop}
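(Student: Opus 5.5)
We prove the three parts separately. All three follow from the uniform convergence of the relevant jets on compact sets, together with a choice of coordinates in which the varying normal bundles become comparable.

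\emph{Part (1).} We argue by contradiction using the inverse function theorem applied to a family of maps. Since $f_i \to f_0$ in $C^1$, the tangent spaces $T N_i$ converge to $T N_0$. Since $\mathfrak{g}_i \to \mathfrak{g}_0$ in $C^0$ (indeed $C^2$), the normal bundles $\nu(N_i,\mathfrak{g}_i)$ converge to $\nu(N_0,\mathfrak{g}_0)$. Concretely, we fix a local trivialization near $p$. Pull $\nu(N_i, \mathfrak{g}_i)$ back to a fixed model $U \times \mathbb{R}^k$, with $U \subset N$ open, using a $\mathfrak{g}_i$-orthonormal frame of the normal bundle. This frame is obtained by Gram--Schmidt from fixed transverse vector fields composed with $f_i$, so it depends $C^1$-continuously on $(f_i, \mathfrak{g}_i)$ as long as $f_i \to f_0$ in $C^1$. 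In the model, $\mathcal{E}_i$ becomes a map $F_i : U \times \mathbb{R}^k \to M$. We then need $F_i \to F_0$ in $C^1$ near $(p, \mathbf{v}_0)$, and this is the delicate point. The geodesic flow of $\mathfrak{g}_i$ involves Christoffel symbols, so it converges in $C^1$ because $\mathfrak{g}_i \to \mathfrak{g}_0$ in $C^2$. The base point map $f_i$ converges only in $C^1$, and the frame enters only through first derivatives of $f_i$. Hence the composite $F_i$ converges in $C^0$, and we must check its first derivatives. The derivative of $F_i$ in the $U$-direction involves derivatives of the frame, and these involve second derivatives of $f_i$, which we do not control.

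To avoid this, we use the mean value form of the inverse function theorem. Suppose $dF_0$ is invertible at $(p,\mathbf{v}_0)$, i.e.\ $\mathbf{v}_0$ is not critical. We then want a neighborhood $W$ of $(p, \mathbf{v}_0)$ and $c>0$ such that
\[ |F_i(x)-F_i(y)| \ge c|x-y| \quad \text{for all } x, y \in W \text{ and all large } i. \]
This contradicts $F_i(\mathbf{v}_i^1)=F_i(\mathbf{v}_i^2)$ with $\mathbf{v}_i^1\ne \mathbf{v}_i^2$. To obtain the bound, we parametrize differently, taking the normal fibres as a graph over the fixed bundle $\nu(N_0,\mathfrak{g}_0)$ via a $C^1$-close diffeomorphism. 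A cleaner alternative is to write $\mathcal{E}_i(\mathbf{v}) = \exp_{\mathfrak{g}_i}(\mathbf{v})$ on all of $TM$, whose restriction to a neighborhood converges in $C^1$. The set $\nu(N_i,\mathfrak{g}_i)$ is a submanifold of $TM$ that is $C^0$-close to $\nu(N_0,\mathfrak{g}_0)$, with tangent planes close to those of $\nu(N_0, \mathfrak{g}_0)$ in a secant sense. Then the chord $\mathbf{v}_i^2-\mathbf{v}_i^1$, normalized in local coordinates of $TM$, subconverges to a vector $w$ tangent to $\nu(N_0,\mathfrak{g}_0)$ at $\mathbf{v}_0$. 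Meanwhile $d\exp_{\mathfrak{g}_0}(w)=0$ by the mean value theorem applied to the $C^1$-convergent maps $\exp_{\mathfrak{g}_i}$. So $w$ lies in the kernel of $d\mathcal{E}_0$, and $\mathbf{v}_0$ is critical. The main obstacle is justifying that normalized chords of $\nu(N_i,\mathfrak{g}_i)$ limit to tangent vectors of $\nu(N_0,\mathfrak{g}_0)$. This uses that $\nu(N_i,\mathfrak{g}_i)$ is locally the image of a $C^1$-convergent family of embeddings of $N\times\mathbb{R}^k$; the frame depends only on $df_i$ and $\mathfrak{g}_i$, so a $C^1$ family requires care, and we may need to smooth the frame by a fixed one projected orthogonally. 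We then apply the standard chord lemma for uniformly $C^1$-convergent embeddings.

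\emph{Part (2).} In local coordinates, the second fundamental form of $N_i$ at $f_i(x)$ is the $\mathfrak{g}_i$-normal part of
\[ \partial^2 f_i + \Gamma^{\mathfrak{g}_i}(\partial f_i, \partial f_i). \]
The term $\partial^2 f_i$ is uniformly bounded because $f_i \to f_0$ in $C^2$, and the Christoffel symbols $\Gamma^{\mathfrak{g}_i}$ are uniformly bounded because $\mathfrak{g}_i \to \mathfrak{g}_0$ in $C^1$. The induced metrics $f_i^*\mathfrak{g}_i$ are uniformly equivalent to $f_0^*\mathfrak{g}_0$. Covering the compact $N$ by finitely many charts, the operator norms of the shape operators $\mathcal{S}_{\mathbf{n}}$ are therefore bounded uniformly over unit $\mathbf{n}$. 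This bounds the absolute principal curvatures by some $\Delta$.

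\emph{Part (3).} Since $\mathfrak{g}_i \to \mathfrak{g}_0$ in $C^0$ on compact $M$, for each $\epsilon>0$ we have
\[ (1+\epsilon)^{-1}\mathfrak{g}_0 \le \mathfrak{g}_i \le (1+\epsilon)\mathfrak{g}_0 \quad \text{for large } i. \]
Hence $d_{\mathfrak{g}_i}$ and $d_{\mathfrak{g}_0}$ are comparable up to a factor $(1+\epsilon)$, and $d_{\mathfrak{g}_0}$ is bounded on $M$ by compactness. Since $f_i \to f_0$ uniformly, the Hausdorff distance $d_{\mathsf H}(N_i,N_0)$ with respect to $\mathfrak{g}_0$ tends to $0$. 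Combining these,
\[ |u_i(q)-u_0(q)| \le \epsilon\,\mathrm{diam}_{\mathfrak{g}_0}(M) + d_{\mathsf H}(N_i,N_0) \]
uniformly in $q$, so $u_i \to u_0$ in $C^0$. Finally, since $u_0$ is $1$-Lipschitz, we get $u_i(q_i)\to u_0(q_0)$ for any convergent sequence $q_i \to q_0$.
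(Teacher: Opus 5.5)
Your Part (1) has a genuine gap, at the step you flag as the main obstacle: the claim that normalized chords of $\nu(N_i,\mathfrak g_i)\subset TM$ subconverge to vectors tangent to $\nu(N_0,\mathfrak g_0)$.

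\begin{itemize}
\item No choice or smoothing of frames can supply this. A frame only reparametrizes the fixed subset $\nu(N_i,\mathfrak g_i)$ of $TM$.
\item The tangent spaces of that subset in base directions are governed by the shape operators of $N_i$, i.e.\ by second derivatives of $f_i$.
\end{itemize}

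Under the stated hypothesis $f_i\xrightarrow{C^1}f_0$ the step is false, and so is (1) itself. Take the flat torus $M=\mathbb{R}^2/2\pi\mathbb{Z}^2$ with $\mathfrak g_i=\mathfrak g_0$, $N=S^1$, $f_0(\theta)=(\theta,0)$ and $f_i(\theta)=(\theta,\,i^{-3}\sin(i^2\theta))$.

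\begin{itemize}
\item $f_i\xrightarrow{C^1}f_0$, but $N_i$ has curvature $i$ at the crest $\theta_i=\pi/(2i^2)$, and $N_i$ is symmetric about the line $x=\theta_i$.
\item For small $s>0$, the downward normals at $\theta_i\pm s$ meet on this line at a common distance $t_i(s)$, with $t_i(s)\to 1/i$ as $s\to 0$.
\item With $s_i=i^{-3}$ this gives distinct $\mathbf v_i^1,\mathbf v_i^2$ with $\mathcal E_i(\mathbf v_i^1)=\mathcal E_i(\mathbf v_i^2)$. Both converge to the zero vector at $(0,0)\in N_0$, which is not a critical point of $\mathcal E_0$.
\item In coordinates $(x,y,\xi,\eta)$ on $TM$, the chord is exactly $\mathbf v_i^2-\mathbf v_i^1=2s_i(\partial_x-\partial_\xi)$. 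This direction is killed by $d\exp_{\mathfrak g_0}$, so your mean value step is fine. It is, however, transverse to $\nu(N_0,\mathfrak g_0)=\{y=0,\ \xi=0\}$.
\end{itemize}

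The same example gives $\mathrm{Inj}(N_i,\mathfrak g_i)\le 1/i$, while $\mathrm{Inj}(N_0,\mathfrak g_0)=\pi$.

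So (1) needs $f_i\xrightarrow{C^2}f_0$. That is what actually holds wherever the paper invokes (1), namely in \autoref{lemma:injectivityRadiusLimInf} and \autoref{lemma:conv_2}. Under that hypothesis your Gram--Schmidt frames converge in $C^1$. Together with $\exp_{\mathfrak g_i}\xrightarrow{C^1}\exp_{\mathfrak g_0}$, this gives $F_i\xrightarrow{C^1}F_0$, and either your uniform lower Lipschitz bound or the chord argument finishes the proof.

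This is the same mechanism as the paper's proof. The paper conjugates by bundle isomorphisms $\mathcal L_i$, so that $\Phi_i=\mathcal E_i\circ\mathcal L_i\xrightarrow{C^1}\Phi_0$. It then uses that $C^1$-close perturbations of a local diffeomorphism are injective on a fixed neighborhood.

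The paper's own assertion that $\mathcal L_i\xrightarrow{C^1}\mathrm{Id}$ already under $f_i\xrightarrow{C^1}f_0$ runs into exactly the difficulty you identified. $\mathcal L_i$ is built from $(d\mathcal E_0)^{-1}\nu(N_i,\mathfrak g_i)$, which involves $TN_i$. Indeed, in Case 2 of \autoref{lemma:injectivityRadiusLimInf} the paper itself uses $f_i\xrightarrow{C^2}f_0$ to get $\Psi_i\xrightarrow{C^1}\mathrm{Id}$.

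Your Parts (2) and (3) are correct and more direct than the paper's.

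\begin{itemize}
\item In (2) you bound $\partial^2 f_i+\Gamma^{\mathfrak g_i}(\partial f_i,\partial f_i)$ chartwise and get an explicit $\Delta$. The paper instead argues by contradiction, using $C^1$-convergent tangent extensions $V_i$ and the limit of $\nabla^{\mathfrak g_i}_{V_i}V_i$.
\item In (3) your estimate $\lvert u_i-u_0\rvert\le\epsilon\,\mathrm{diam}_{\mathfrak g_0}(M)+d_{\mathsf H}(N_i,N_0)$ gives uniform convergence at once. This replaces the paper's $\liminf$/$\limsup$ argument, which relies on a cited lemma for point-to-point distances followed by Arzel\`a--Ascoli.
\end{itemize}
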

\begin{proof}
    \hyperref[prop:convergenceConsequences:injectivity]{(1)} Recall that $\mathcal{E}_0$ is a diffeomorphism near the $0$-section of $\nu(N_0, \mathfrak{g}_0)$, giving rise to a tubular neighborhood. Since $f_i \xrightarrow{C^1} f_0$, it follows that for $i$ large, there are smooth sections $s_i \in \Gamma \nu(N_0, \mathfrak{g}_0)$ such that $\im \mathcal{E}_0 s_i = N_i$. Denote $\xi_i \coloneqq (d \mathcal{E}_0)^{-1} \nu(N_i, \mathfrak{g}_i)$, which is a subbundle of $T \nu(N_0, \mathfrak{g}_0)|_{\im s_i}$. Note that $\xi_i$ is transverse to $T(\im s_i)$, and thus, gives a direct sum decomposition $T \nu(N_0, \mathfrak{g}_0)|_{\im s_i} = T(\im s_i) \oplus \xi_i$. As $s_0$ is the $0$-section, we have the decomposition $T \nu(N_0, \mathfrak{g}_0)|_{\im s_0} = \nu(N_0, \mathfrak{g}_0) \oplus TN_0$. The fiberwise map $\mathbf{v} \mapsto \mathbf{v} + s_i (\pi (\mathbf{v}))$ gives a self-diffeomorphism of $\nu(N_0, \mathfrak{g}_0)$, which maps $TN_0$ to $T(\im s_i)$. Then, using the decomposition, we have a smooth bundle isomorphism $\nu(N_0, \mathfrak{g}_0) \rightarrow \xi_i$. Denote the composition $\mathcal{L}_i : \nu(N_0, \mathfrak{g}_0) \rightarrow \xi_i \xrightarrow{d \mathcal{E}_0} \nu(N_i, \mathfrak{g}_i)$, which is a bundle isomorphism, and hence, a diffeomorphism. Note that $\mathcal{L}_0 = \mathrm{Id}_{\nu(N_0, \mathfrak{g}_0)}$, and moreover, $\mathcal{L}_i$ depends only on the $0$\textsuperscript{th}-jet of $s_i$, and hence, of $f_i$. In particular, as $f_i \xrightarrow{C^1} f_0$, it follows that $\mathcal{L}_i \xrightarrow{C^1} \mathcal{L}_0$ as well. Denote the composition $\Phi_i : \nu(N_0, \mathfrak{g}_0) \xrightarrow{\mathcal{L}_i} \nu(N_i, \mathfrak{g}_i) \xrightarrow{\mathcal{E}_i} M$. Since $\exp_{\mathfrak{g}_i} \xrightarrow{C^1} \exp_{\mathfrak{g}_0}$ for $\mathfrak{g}_i \xrightarrow{C^2} \mathfrak{g}_0$ \cite[Lemam 1.6]{sakaiInjectivityCont}, it follows that $\Phi_i \xrightarrow{C^1} \Phi_0$ as well. Now, suppose $\mathbf{v}_i^1, \mathbf{v}_i^2 \in \nu(N_i, \mathfrak{g}_i)$ such that $\mathcal{E}_i (\mathbf{v}_i^1) = \mathcal{E}_i (\mathbf{v}_i^2)$, and $\mathbf{v}_i^1, \mathbf{v}_i^2 \rightarrow \mathbf{v}_0 \in \nu_p (N_0, \mathfrak{g}_0)$ for some $p \in N_0$. If possible, suppose $\mathcal{E}_0$ (and hence, $\Phi_0$) is nonsingular at $\mathbf{v}_0$. Then, $\Phi_0 : \nu(N_0, \mathfrak{g}_0) \rightarrow M$ is a diffeomorphism near $\mathbf{v}_0$. Since $\Phi_i \xrightarrow{C^1} \Phi_0$, it follows that $\Phi_i$ is a diffeomorphism in some compact neighborhood $\mathbf{v}_0 \in U \subset \nu(N_0, \mathfrak{g}_0)$ for $i$ large \cite[Lemma 1.3, pg. 36]{hirschBook}. In particular, $\mathcal{E}_i$ must be injective on $U$, which is a contradiction. Hence, $\mathbf{v}_0$ is a critical point of $\mathcal{E}_0$.\medskip

    \hyperref[prop:convergenceConsequences:sectional]{(2)} Let us now assume that $f_i \xrightarrow{C^2} f_0$ and $\mathfrak{g}_i \xrightarrow{C^2} \mathfrak{g}_0$. From the discussion above, we have $s_i \xrightarrow{C^2} s_0$. If possible, suppose the absolute principal curvatures of $N_i$, with respect to $\mathfrak{g}_i$, are unbounded. Then, there are $\mathbf{n}_i \in \nu^{1}_{p_i} (N_i, \mathfrak{g}_i)$ and $\mathbf{v}_i \in T_{p_i} N_i$ with $\left\lVert \mathbf{v}_i \right\rVert_{\mathfrak{g}_i} = 1$, such that $\mathcal{S}^{N_i, \mathbf{g}_i}_{\mathbf{n}_i} (\mathbf{v}_i) = \lambda_i \mathbf{v}_i$ and $\left\lvert \lambda_i \right\rvert \rightarrow \infty$. Using $\mathcal{E}_0$ we can transfer these vectors $\mathbf{v}_i$ back to some $\widetilde{\mathbf{n}}_i, \widetilde{\mathbf{v}}_i \in T_{\tilde{p}_i} \nu(N_0, \mathfrak{g}_0)$, where $\mathcal{E}_0 (\tilde{p}_i) = p_i$. Since these vectors have norm $1$ (with respect to the pulled back metrics), passing to a subsequence, we can get $\widetilde{\mathbf{n}}_i \rightarrow \widetilde{\mathbf{n}}_0 \in \nu^1_{p_0} (N_0)$, whence $p_i \rightarrow p_0$, and also $\widetilde{\mathbf{v}}_i \rightarrow \widetilde{\mathbf{v}}_0 \in T_{p_0} N_0$ with $\left\lVert \widetilde{\mathbf{v}}_0 \right\rVert_{\mathfrak{g}_0} = 1$. Note that we have $\mathbf{n}_i \rightarrow \mathbf{n}_0 = \widetilde{\mathbf{n}}_0 \in \nu^1_{p_0} (N_0, \mathfrak{g}_0)$. Using the local triviality of $\nu(N_0, \mathfrak{g}_0)$, we can get local extensions $\widetilde{V}_i$ of $\widetilde{\mathbf{v}}_i$ tangent to the sections $s_i$ such that they converge to an extension $\widetilde{V}_0$ of $\widetilde{\mathbf{v}}_0$ tangent to $N_0$; as $s_i \xrightarrow{C^2} s_0$, we can assert that $\widetilde{V}_i \xrightarrow{C^1} \widetilde{V}_0$. Transporting back to $M$ via $d \mathcal{E}_0$, we have local sections $V_i \in \Gamma TN_i$ extending $\mathbf{v}_i$, such that $V_i \xrightarrow{C^1} V_0 \in \Gamma TN_0$, which extends $\mathbf{v}_0$. Since $\mathfrak{g}_i \xrightarrow{C^1} \mathfrak{g}_0$, we have $C^0$-convergence in the Christoffel symbols. Consequently, we have $\lim_i \nabla^{\mathfrak{g}_i}_{V_i} V_i |_{p_i} = \nabla^{\mathfrak{g}_0}_{V_0} V_0|_p$. But then, 
    \[\lambda_i = \mathfrak{g}_i \left( \mathcal{S}_{\mathbf{n}_i}^{N_i, \mathfrak{g}_i} \mathbf{v}_i, \mathbf{v}_i \right) = \mathfrak{g}_i \left( \mathbf{n}_i, -\nabla^{\mathfrak{g}_i}_{V_i} V_i \middle|_{p_i}  \right),\]
    and passing to the limit we have 
    \[\lim \lambda_i = \lim \mathfrak{g}_i \left( \mathbf{n}_i, -\nabla^{\mathfrak{g}_i}_{V_i} V_i \middle|_{p_i}  \right) = \mathfrak{g}_0\left( \mathbf{n}_0, -\nabla^{\mathfrak{g}_0}_{V_0} V_0 \middle|_{p} \right) = \mathfrak{g}_0\left( \mathcal{S}_{\mathbf{n}_0}^{N_0, \mathfrak{g}_0} \mathbf{v}, \mathbf{v} \right) < \infty.\]
    This contradicts $\left\lvert \lambda_i \right\rvert \rightarrow \infty$. Hence, the absolute principal curvatures of $N_i$ in every unit normal direction, with respect to $\mathfrak{g}_i$, must be bounded by some $\Delta > 0$.\medskip

    \hyperref[prop:convergenceConsequences:distance]{(3)} Finally, we verify the convergence of distance functions. Assume, $f_i \xrightarrow{C^0} f_0, \mathfrak{g}_i \xrightarrow{C^0} \mathfrak{g}_0$. Suppose the distance $d_{\mathfrak{g}_0} (N_0, q_0)$ is attained at some $p_0 \in N_0$. Say, $p \in N$ is such that $f_0 (p) = p_0$. Set $p_i = f_i (p) \in N_i$. As $f_i \xrightarrow{C^0} f_0$, we have $p_i \rightarrow p_0$. Now, we have $\lim_i d_{\mathfrak{g}_i} (p_i, q_i) = d_{\mathfrak{g}_0} (p_0, q_0)$ \cite[Lemma 2.4]{bhowmickItohPrasadCutLocusStability}. Since $d_{\mathfrak{g}_i} (N_i, q_i) \le d_{\mathfrak{g}_i} (p_i, q_i)$, taking $\lim \sup$ we have 
    \[\lim\sup d_{\mathfrak{g}_i} (N_i, q_i) \le \lim\sup d_{\mathfrak{g}_i} (p_i, q_i) = \lim d_{\mathfrak{g}_i} (p_i, q_i) = d_{\mathfrak{g}_0} (p_0, q_0) = d_{\mathfrak{g}_0} (N_0, q_0).\]
    Next, suppose the distance $\ell_i \coloneqq d_{\mathfrak{g}_i} (N_i, q_i)$ is achieved at some $y_i \in N_i$. Passing to a subsequence, assume that $\lim \ell_i = \ell_0 \coloneqq \lim\inf d_{\mathfrak{g}_i} (N_i, q_i)$. We have $y_i = f_i (x_i)$ for $x_i \in N$. As $N$ is compact, passing to a subsequence, we get $x_{i_k} \rightarrow x_0 \in N$. Set $y_0 = f_0 (x_0) \in N_0$, and note that $y_{i_k} \rightarrow y_0$. We have, 
    \[\lim\inf d_{\mathfrak{g}_i}(N_i, q_i) = \ell_0 = \lim \ell_i = \lim \ell_{i_k} = \lim d_{\mathfrak{g}_{i_k}}(y_{i_k}, q_{i_k}) = d_{\mathfrak{g}_0} (y_0, q_0) \ge d_{\mathfrak{g}_0} (N_0, q_0).\]
    Hence, $\lim d_{\mathfrak{g}_i} (N_i, q_i) = d_{\mathfrak{g}_0} (N_0, q_0)$, as required. The uniform convergence of the distance functions follows in the same vein as \cite[Lemma 2.6]{bhowmickItohPrasadCutLocusStability}. Indeed, we observe that the family is uniformly Lipschitz and bounded, whence the uniform convergence follows by an application of the Arzel\'{a}-Ascoli theorem.
\end{proof}

The continuity of the injectivity radius now follows from the following two lemmas.
\begin{lemma}\label{lemma:injectivityRadiusLimSup}
    $\lim \sup \mathrm{Inj}(N_i, \mathfrak{g}_i) \le \mathrm{Inj}(N_0, \mathfrak{g}_0)$, provided $f_i \xrightarrow{C^1} f_0$, and $\mathfrak{g}_i \xrightarrow{C^1} \mathfrak{g}_0$.
\end{lemma}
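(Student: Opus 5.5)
Since $\mathrm{Inj}(N_0, \mathfrak{g}_0) = d_{\mathfrak{g}_0}(N_0, \mathrm{Cut}(N_0, \mathfrak{g}_0))$ is attained at some cut point, the plan is to approximate that cut point by points which are \emph{not} far from $N_i$ yet lie in $\mathrm{Cut}(N_i, \mathfrak{g}_i)$, or at least beyond the cut time in a given direction. By \eqref{eq:cutLocusSepSet}, $\mathrm{Cut}(N_0,\mathfrak{g}_0) = \overline{\mathrm{Sep}(N_0,\mathfrak{g}_0)}$. So for every $\epsilon > 0$ there is a separating point $q_0 \in \mathrm{Sep}(N_0, \mathfrak{g}_0)$ with $d_{\mathfrak{g}_0}(N_0, q_0) < \mathrm{Inj}(N_0,\mathfrak{g}_0) + \epsilon$. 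It therefore suffices to show
\[
\limsup_i \mathrm{Inj}(N_i, \mathfrak{g}_i) \le d_{\mathfrak{g}_0}(N_0, q_0) \quad \text{for every } q_0 \in \mathrm{Sep}(N_0, \mathfrak{g}_0).
\]

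Fix such $q_0$ with $\ell = d_{\mathfrak{g}_0}(N_0, q_0)$, and two distinct $N_0$-segments $\gamma^1,\gamma^2$ from $p^1, p^2 \in N_0$ to $q_0$, with initial unit normals $\mathbf{n}^1 \ne \mathbf{n}^2$. Take $\mathbf{n}_i^j \in \nu^1(N_i,\mathfrak{g}_i)$ converging to $\mathbf{n}^j$; this is possible since $f_i \xrightarrow{C^1} f_0$ and $\mathfrak{g}_i \xrightarrow{C^0}\mathfrak{g}_0$ make the unit normal bundles converge. Suppose, for contradiction, that along a subsequence $\mathrm{Inj}(N_i,\mathfrak{g}_i) > \ell + \delta$ for some $\delta>0$. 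Then $\gamma_i^j := \gamma^{\mathfrak{g}_i}_{\mathbf{n}_i^j}$ is an $N_i$-segment on $[0, \ell+\delta]$. Moreover, $\mathcal{E}_i$ restricted to the open $(\ell+\delta)$-disc bundle $\{\mathbf{v} \in \nu(N_i,\mathfrak{g}_i) : |\mathbf{v}| < \ell+\delta\}$ is injective. Indeed, if two such vectors had the same image $q$, the larger-norm one would give a geodesic that is not minimizing, while the two equal-norm ones would both be $N_i$-segments to $q$, making $q$ a separating point at distance $<\mathrm{Inj}$. Either way this is a contradiction.

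The key step is then to get a quantitative contradiction from the points $q_i^j := \gamma_i^j(\ell)$. Geodesics depend continuously on the metric in $C^1$, since the Christoffel symbols converge in $C^0$ and ODE solutions depend continuously on the coefficients. Hence $q_i^j \to q_0$ for $j = 1,2$, and
\[
d_{\mathfrak{g}_i}(q_i^1, q_i^2) \to 0, \qquad d_{\mathfrak{g}_i}(N_i, q_i^j) = \ell .
\]
I would use the first-variation/triangle-inequality argument: the point $\gamma_i^1(\ell + \delta/2)$ is at $N_i$-distance exactly $\ell+\delta/2$. Compare this with the broken curve that follows $\gamma_i^2$ up to time $\ell$ and then goes along a minimizing geodesic from $q_i^2$ to $\gamma_i^1(\ell+\delta/2)$. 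Its length is
\[
\ell + d_{\mathfrak{g}_i}\bigl(q_i^2, \gamma_i^1(\ell+\delta/2)\bigr),
\]
and the second term converges to $d_{\mathfrak{g}_0}\bigl(q_0, \gamma^1_{\mathbf{n}^1}(\ell+\delta/2)\bigr)$ by \cite[Lemma 2.4]{bhowmickItohPrasadCutLocusStability}. In the limit metric this broken curve has a genuine corner at $q_0$, because $\dot\gamma^1(\ell) \neq \dot\gamma^2(\ell)$; two distinct geodesics cannot share both endpoint and velocity. Hence
\[
d_{\mathfrak{g}_0}\bigl(q_0, \gamma^1(\ell+\delta/2)\bigr) < \delta/2 ,
\]
as $\gamma^1|_{[\ell,\ell+\delta/2]}$ is not the unique minimizer once we can shortcut. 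More precisely, $\gamma^1$ extended past $q_0$ is not $N_0$-minimizing, and by compactness the strict inequality $d_{\mathfrak{g}_0}(N_0, \gamma^1(\ell+\delta/2)) < \ell + \delta/2 - \eta$ holds for some $\eta>0$. Proposition~\ref{prop:convergenceConsequences}\eqref{prop:convergenceConsequences:distance} then gives
\[
d_{\mathfrak{g}_i}\bigl(N_i, \gamma_i^1(\ell+\delta/2)\bigr) \to d_{\mathfrak{g}_0}\bigl(N_0, \gamma^1(\ell+\delta/2)\bigr) < \ell+\delta/2 - \eta ,
\]
contradicting that $\gamma_i^1$ is an $N_i$-segment up to time $\ell+\delta$. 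This argument in fact bypasses the injectivity step entirely: it only needs that a segment slightly past a separating point is strictly non-minimizing, which is stable in the limit.

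The expected obstacle is verifying the needed convergences under only $C^1$ hypotheses. These are the convergence $\gamma_i^1 \to \gamma^1$ uniformly on $[0,\ell+\delta]$, the existence of $\mathbf{n}_i^1 \to \mathbf{n}^1$, and the strict non-minimality of $\gamma^1$ past the separating point $q_0$. The last is standard, since a cut point is never followed by minimization, and $q_0 \in \mathrm{Cut}$. The first is the continuous dependence of ODE solutions on $C^0$-convergent coefficients. With these in hand the argument only uses a single direction $\mathbf{n}^1$, and in fact shows $\limsup_i \rho_{\mathfrak{g}_i}(N_i,\mathbf{n}_i^1) \le \rho_{\mathfrak{g}_0}(N_0,\mathbf{n}^1)$. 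Taking $\mathbf{n}^1$ realizing the infimum in \eqref{eq:injectivityRadius}, which exists by closedness of the cut locus, yields the lemma directly.
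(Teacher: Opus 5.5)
Your final argument is correct and is essentially the paper's proof in contrapositive form. The paper fixes an arbitrary $\mathbf{n}_0$, approximates it by $\mathbf{n}_i$, and uses geodesic convergence together with \autoref{prop:convergenceConsequences}~(3) to pass the equality $d_{\mathfrak{g}_i}\bigl(N_i,\gamma^{\mathfrak{g}_i}_{\mathbf{n}_i}(R_i)\bigr)=R_i$ (with $R_i=\mathrm{Inj}(N_i,\mathfrak{g}_i)$) to the limit, obtaining $\rho_{\mathfrak{g}_0}(N_0,\mathbf{n}_0)\ge\limsup R_i$; you use the same ingredients to show that the strict inequality $d_{\mathfrak{g}_0}\bigl(N_0,\gamma^1(t)\bigr)<t$ for $t>\rho_{\mathfrak{g}_0}(N_0,\mathbf{n}^1)$ persists for $\mathfrak{g}_i$.

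The detour through separating points, injectivity of $\mathcal{E}_i$ and the broken curve is unnecessary. Your intermediate claim $d_{\mathfrak{g}_0}\bigl(q_0,\gamma^1(\ell+\delta/2)\bigr)<\delta/2$ is actually false for small $\delta$, since geodesics are locally minimizing. Nothing depends on it, though: the inequality you really use, $d_{\mathfrak{g}_0}\bigl(N_0,\gamma^1(\ell+\delta/2)\bigr)<\ell+\delta/2$, follows directly from \eqref{eq:cutTime} because $\ell=\rho_{\mathfrak{g}_0}(N_0,\mathbf{n}^1)$.
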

\begin{proof}
    Denote $R_i \coloneqq \mathrm{Inj}(N_i, \mathfrak{g}_i)$ and $R \coloneqq \lim \sup R_i$. Passing to a subsequence, we assume that $R = \lim_i R_i$. Let us consider some $\mathbf{n}_0 \in \nu^1(N_0, \mathfrak{g}_0)$ and the $N_0$-geodesic $\gamma_{\mathbf{n}_0}^{\mathfrak{g}_0} : [0, R] \rightarrow M$. We show that $d_{\mathfrak{g}_0}(N_0, \gamma_{\mathbf{n}_0}^{\mathfrak{g}_0}(R)) \ge R$, whence, $\rho_{\mathfrak{g}_0}(N_0, \mathbf{n}_0) \ge R$ follows. As discussed in the proof of \autoref{prop:convergenceConsequences} \hyperref[prop:convergenceConsequences:injectivity]{(1)}, using the tubular neighborhood of $N_0$ with respect to $\mathfrak{g}_0$, for $i$ large, we have bundle isomorphisms $\Psi_i : \nu(N_0, \mathfrak{g}_0) \rightarrow \nu(N_i, \mathfrak{g}_i)$, such that $\Psi_i \xrightarrow{C^0} \Psi_0 = \mathrm{Id}$. Denote, 
    \[\mathfrak{n}_i = \frac{\Psi_i (\mathfrak{n}_0)}{\left\lVert \Psi_i (\mathfrak{n}_0) \right\rVert_{\mathfrak{g}_i}} \in \nu^1 (N_i, \mathfrak{g}_i).\]
    Since $\mathfrak{g}_i \xrightarrow{C^0} \mathfrak{g}$, we have $\left\lVert \Psi_i (\mathbf{n}_0) \right\rVert _{\mathfrak{g}_i} \rightarrow \left\lVert \Psi_0 (\mathbf{n}_0) \right\rVert _{\mathfrak{g}_0} = \left\lVert \mathbf{n}_0 \right\rVert = 1$, and hence, $\mathbf{n}_i \rightarrow \mathbf{n}_0$. Consider the $\mathfrak{g}_i$-geodesics $\gamma_{\mathbf{n}_i}^{\mathfrak{g}_i} : [0, R_i] \rightarrow M$,
    which are clearly $N_i$-semgents with respect to $\mathfrak{g}_i$, as $R_i = \mathrm{Inj}(N_i, \mathfrak{g}_i)$. Since $\mathfrak{g}_i \xrightarrow{C^1} \mathfrak{g}_0$, it follows from \cite[Lemma 1.5]{sakaiInjectivityCont} that 
    \[\lim_i \gamma_{\mathbf{n}_i}^{\mathfrak{g}_i}(R_i) = \gamma_{\mathbf{n}_0}^{\mathfrak{g}_0}(R).\]
    Hence, from \autoref{prop:convergenceConsequences} \hyperref[prop:convergenceConsequences:distance]{(3)} we have 
    \[ R = \lim R_i = \lim d_{\mathfrak{g}_i} \left( N_i, \gamma_{\mathbf{n}_i}^{\mathfrak{g}_i}(R_i) \right) = d_{\mathfrak{g}_0} \left( N_0, \gamma_{\mathbf{n}_0}^{\mathfrak{g}_0}(R) \right). \]
    Thus, we have $\rho_{\mathfrak{g}_0}(N_0, \mathbf{n}_0) \ge R$. As $\mathbf{n}_0 \in \nu^1(N_0, \mathfrak{g}_0)$ is arbitrary, we have $\mathrm{Inj}(N_0, \mathfrak{g}_0) \ge R = \lim \sup \mathrm{Inj}(N_i, \mathfrak{g}_i)$ as required.
\end{proof}

\begin{lemma}\label{lemma:injectivityRadiusLimInf}
    $\lim \inf \mathrm{Inj}(N_i, \mathfrak{g}_i) \ge \mathrm{Inj}(N_0, \mathfrak{g}_0)$, provided $f_i \xrightarrow{C^2} f_0$ and $\mathfrak{g}_i \xrightarrow{C^2} \mathfrak{g}_0$.
\end{lemma}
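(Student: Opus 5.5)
We argue by contradiction, following Sakai's strategy for points. Set $R_i \coloneqq \mathrm{Inj}(N_i, \mathfrak{g}_i)$ and suppose, after passing to a subsequence, that $\lim R_i = R < \mathrm{Inj}(N_0, \mathfrak{g}_0)$. By \autoref{lemma:injectivityRadiusCharacterization}, applied to each $(N_i, \mathfrak{g}_i)$ (compactness of $M$ gives the needed minimum, cf. \autoref{rmk:injRadiusCharCompactness}), each $R_i$ is the minimum of $f_{\min}^i$ and $\ell^i_{\frac12}$. Moreover, the proof of that lemma shows the minimum is realized: there is $q_i \in \mathrm{Cut}(N_i,\mathfrak{g}_i)$ with $d_{\mathfrak{g}_i}(N_i,q_i)=R_i$. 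Such a $q_i$ is either the first focal point along an $N_i$-segment of length $R_i$, or the midpoint of an $N_i$-geodesic loop of length $2R_i$. Passing to a further subsequence, one of the two alternatives occurs for all $i$, and we treat the two cases separately.

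\emph{Focal case.} There are $\mathbf{n}_i \in \nu^1(N_i,\mathfrak{g}_i)$ with $R_i\mathbf{n}_i$ a critical point of $\mathcal{E}_i$. We first need $R>0$, i.e.\ the focal distances are uniformly bounded below. For this we use \autoref{prop:convergenceConsequences}\,\hyperref[prop:convergenceConsequences:sectional]{(2)}, which bounds the principal curvatures by $\Delta$, together with the uniform sectional curvature bound $K$ coming from $\mathfrak{g}_i \xrightarrow{C^2}\mathfrak{g}_0$. A Rauch/Riccati comparison for $N$-Jacobi fields, with initial data governed by the shape operator, then gives a uniform $t_0>0$ below which no focal point occurs. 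Next, by compactness of the unit normal bundles (transported via the maps $\Psi_i$ from the proof of \autoref{lemma:injectivityRadiusLimSup}), $\mathbf{n}_i \to \mathbf{n}_0 \in \nu^1(N_0,\mathfrak{g}_0)$. Using the maps $\Phi_i=\mathcal{E}_i\circ\mathcal{L}_i \xrightarrow{C^1}\Phi_0$ from \autoref{prop:convergenceConsequences}\,\hyperref[prop:convergenceConsequences:injectivity]{(1)}, the critical points $\mathcal{L}_i^{-1}(R_i\mathbf{n}_i)$ of $\Phi_i$ converge to $R\mathbf{n}_0$. Since degeneracy of the differential is a closed condition under $C^1$ convergence, $R\mathbf{n}_0$ is critical for $\mathcal{E}_0$. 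Hence $f^0_{\min}\le R<\mathrm{Inj}(N_0,\mathfrak{g}_0)$, contradicting \autoref{lemma:injectivityRadiusCharacterization}.

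\emph{Loop case.} We have $N_i$-geodesic loops $\gamma_i:[0,2R_i]\to M$. Again $R>0$ is needed, and we expect this to be the main obstacle: one must rule out arbitrarily short $N_i$-geodesic loops, uniformly in $i$. Our approach is to exhibit a uniform tubular neighbourhood. Fix $\epsilon$ smaller than the focal bound $t_0$ above and also such that $\Phi_0$ is a diffeomorphism on the $2\epsilon$-disc bundle. Because $\Phi_i\xrightarrow{C^1}\Phi_0$ on this compact set, $\Phi_i$ is injective there for $i$ large; the argument is Hirsch's lemma, as used in \autoref{prop:convergenceConsequences}\,\hyperref[prop:convergenceConsequences:injectivity]{(1)}, now applied globally on a compact set. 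A short loop would give two distinct preimages $R_i\dot\gamma_i(0)$ and $-R_i\dot\gamma_i(2R_i)$ of $\gamma_i(R_i)$ inside this disc bundle, which contradicts injectivity. Granting $R>0$, pass to a subsequence with $\dot\gamma_i(0)\to\mathbf{n}_0$, so that the geodesics converge in $C^1$ to an $N_0$-geodesic $\gamma_0:[0,2R]\to M$; this uses geodesic continuity \cite[Lemma 1.5]{sakaiInjectivityCont}. The limit $\gamma_0$ ends on $N_0$ with $\dot\gamma_0(2R)\in\nu^1(N_0,\mathfrak{g}_0)$, since normality is a closed condition under $C^1$ convergence of $f_i$ and $C^0$ convergence of $\mathfrak{g}_i$. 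Thus $\gamma_0$ is an $N_0$-geodesic loop of length $2R$, so $\ell^0_{\frac12}\le R$, again contradicting \autoref{lemma:injectivityRadiusCharacterization}.

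In fact, \autoref{prop:convergenceConsequences}\,\hyperref[prop:convergenceConsequences:injectivity]{(1)} handles part of this directly. If $R\mathbf{n}_0$ and $-R\dot\gamma_0(2R)$ happen to coincide, then by (1) this vector is critical, which places us back in the focal case. So the limit either gives a genuine loop or a focal point, and both contradict $R<\mathrm{Inj}(N_0,\mathfrak{g}_0)$.
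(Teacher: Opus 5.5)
Your proposal is correct and follows essentially the same route as the paper: you realize $R_i$ at a cut point $q_i$ and use Warner's comparison together with \autoref{prop:convergenceConsequences}(1) to force $R>0$ (your uniform injectivity of $\Phi_i$ on a small disc bundle is the same argument). You then pass to the limit in the loop and focal cases to contradict \autoref{lemma:injectivityRadiusCharacterization}, and your closing remark about a degenerate limit loop is harmless but vacuous, since $\gamma_0(2R)=\gamma_0(0)$ with $-\dot\gamma_0(2R)=\dot\gamma_0(0)$ would give $\gamma_0(2R-t)=\gamma_0(t)$ and hence $\dot\gamma_0(R)=0$.
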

\begin{proof}
    Denote $R_i \coloneqq \mathrm{Inj}(N_i, \mathfrak{g}_i)$, and set $R \coloneqq  \lim \inf R_i$. Passing to a subsequence, we assume that $R = \lim_i R_i$. Let $q_i \in \mathrm{Cut}(N_i, \mathfrak{g}_i)$ be points such that $d_{\mathfrak{g}_i}(N_i, q_i) = R_i$. Then, we have $\mathbf{n}_i \in \nu^1(N_i, \mathfrak{g}_i)$, so that the $N_i$-segement $\gamma_i \coloneqq \gamma^{\mathfrak{g}_i}_{\mathbf{n}_i} : [0, R_i] \rightarrow M$, with respect to $\mathfrak{g}_i$, joins $N$ to $q_i$. Using compactness of $M$, passing to subsequence, we assume that $q_i \rightarrow q_0$. As argued in the proof of \autoref{prop:convergenceConsequences} \hyperref[prop:convergenceConsequences:sectional]{(2)}, passing to a subsequence, we also assume $\mathbf{n}_i \rightarrow \mathbf{n}_0 \in \nu^1_{p_0} (N_0, \mathfrak{g}_0)$ for some $p_0 \in N_0$. Moreover, $d_{\mathfrak{g}_0}(N_0, q_0) = \lim_{i} d_{\mathfrak{g}_i}(N_i, q_i) = \lim_i R_i = R$.
    
    If possible, suppose $R = 0$. Then, $d_{\mathfrak{g}_0}(N_0, q_0) = 0$, and so $q_0 \in N$. Since $\mathfrak{g}_i \xrightarrow{C^2} \mathfrak{g}_0$, the Riemannian curvature tensor of $\mathfrak{g}_i$ converges to that of $\mathfrak{g}_0$. Consequently, we can find a uniform bound $K > 0$ for the sectional curvatures of $(M, \mathfrak{g}_i)$. Also, by \autoref{prop:convergenceConsequences} \hyperref[prop:convergenceConsequences:sectional]{(2)}, we have some $\Delta > 0$ such that for each $i \ge 0$, the principal curvatures of $N_i$ in every unit normal direction with respect to $\mathfrak{g}_i$ is strictly bounded from below by $-\Delta$. Consider the quantity 
    \begin{equation}\label{eq:warnerNoFocal}
        \varepsilon \coloneqq \frac{1}{\sqrt{K}} \arctan \left( \frac{K}{\Delta} \right).
    \end{equation}
    Then, by \cite[Corollary 4.2]{warnerRauchComparisonSubmanifold}, it follows that given any unit-speed $N_i$-geodesic $\eta : [0, \varepsilon] \rightarrow M$ with respect to $\mathfrak{g}_i$, there is no focal point of $N_i$ along $\eta$. Now, $\lim_i R_i = R = 0$ implies that for $i$ large, we have $R_i < \varepsilon$. Consequently, $\gamma_i(R_i)$ is not a focal point of $N_i$ along any $N_i$-segment, and hence, by \cite[Theorem 4.16]{bhowmickPrasadFinslerCutLocus}, the extension $\gamma_i : [0, 2 R_i] \rightarrow M$ is an $N_i$-geodesic loop. Denote $p_i^{\prime} \coloneqq \gamma_i (2R_i) \in N_i$, so that $\mathbf{n}_i^{\prime} \coloneqq -\dot \gamma(2 R_i) \in \nu_{p_i^\prime}^1(N_i, \mathfrak{g}_i)$. Let $\mathbf{u}_i^1 \coloneqq R_i \mathbf{n}_i$ and $\mathbf{u}_i^2 \coloneqq R_i \mathbf{n}_i^\prime$. Clearly, $\mathcal{E}_i (\mathbf{u}_i^1) = q_i = \mathcal{E}_i (\mathbf{u}_i^2)$, and  both $\mathbf{u}_i^1, \mathbf{u}_i^2$ converge to the $0$-vector in $\nu_{p_0} (N_0, \mathfrak{g}_0)$. But this is a contradiction by \autoref{prop:convergenceConsequences} \hyperref[prop:convergenceConsequences:injectivity]{(1)}, since $\mathcal{E}_0$ is a diffeomorphism near the $0$-section. Thus, we must have $R > 0$. We now have two cases to consider.
    \begin{description}
        \item[Case 1: ] Suppose, for infinitely many $i$, we have $q_i$ is not a focal point of $N_i$ with respect to any $N_i$-segment joining $N_i$ to $q_i$. Without loss of generality, assume this is the case for all $i$. Then, again by \cite[Theorem 4.16]{bhowmickPrasadFinslerCutLocus}, we have the extension $\gamma_i : [0, 2R_i] \rightarrow M$ is an $N_i$-geodesic loop with respect to $\mathfrak{g}_i$. Consider the $N_0$-geodesic $\gamma_0 = \gamma^{\mathfrak{g}_0}_{\mathbf{n}_0} : [0, 2R] \rightarrow M$. As $f_i \xrightarrow{C^0} f_0$, it follows that $\gamma_0 (2R) = \lim_i \gamma_i (2R_i) \in T_{p_0^\prime} N_0$ for some $p_0^\prime \in N_0$. Then, $\dot\gamma_0(2R) = \lim\dot\gamma_i (2R_i) \in \nu^1_{p_0^\prime}(N_0)$. Indeed, for any $\mathbf{v} \in T_{p_0^\prime} N_0$, we can get $\mathbf{v}_i \in T N_i$ such that $\mathbf{v}_i \rightarrow \mathbf{v}$, whence we have $\mathfrak{g}_0 (\mathbf{v}_0, \dot \gamma_0 (2R)) = \lim \mathfrak{g}_i (\mathbf{v}_i, \dot \gamma_i (2R_i)) = 0$. In particular, $\gamma_0$ is then an $N$-geodesic loop. But then $\mathrm{Inj}(N, \mathfrak{g}_0) \le R$ by \autoref{lemma:injectivityRadiusCharacterization}.

        \item[Case 2: ] Suppose, for all but finitely many $i$ we have $q_i$ is a focal point of $N_i$ along some $N_i$-segment joining $N_i$ to $q_i$. Without loss of generality, assume this is the case for each $\gamma_i$. Then, $\mathcal{K}_i \coloneqq \ker \left( d_{R_i \mathbf{n}_i} \mathcal{E}_i \right) \neq 0$. Consider the bundle isomorphism $\Psi_i : \nu (N_0, \mathfrak{g}_0) \rightarrow  \nu(N_i, \mathfrak{g}_i)$ as in \autoref{lemma:injectivityRadiusLimSup}. Note that $f_i \xrightarrow{C^2} f_0$ implies $\Psi_i \xrightarrow{C^1} \Psi_0 = \mathrm{Id}$. Set $\mathbf{u}_i \coloneqq \Psi_i^{-1} (R_i \mathbf{n}_i)$, so that $\mathbf{u}_i \rightarrow \Psi_0^{-1}(R \mathbf{n}_0) = R \mathbf{n}_0$. We have $\widetilde{\mathcal{K}}_i \coloneqq d \Psi_i^{-1} (\mathcal{K}_i) \subset T_{\mathbf{u}_i} \nu(N_0, \mathfrak{g}_0)$. Fix an auxiliary metric on the manifold $\nu (N_0, \mathfrak{g}_0)$, and choose $\mathbf{x}_i \in \widetilde{\mathcal{K}}_i$ with $\left\lVert \mathbf{x}_i \right\rVert = 1$. Since $\mathbf{u}_i \rightarrow R \mathbf{n}_0$, passing to a subsequence, we have $\mathbf{x}_i \rightarrow \mathbf{x}_0 \in T_{R \mathbf{n}_0} \nu_0 (N_0, \mathfrak{g}_0)$ with $\left\lVert \mathbf{x}_0 \right\rVert = 1$. In particular, $\mathbf{x}_0 \neq 0$. Now,
        \[ d_{R_i \mathbf{n}_i} \mathcal{E}_i \left( d_{\mathbf{u}_i} \Psi_i (\mathbf{x}_i) \right) = 0.\]
        Passing to the limit, we have $0 = d_{R \mathbf{n}_0} \mathcal{E}_0 (d_{R \mathbf{n}_0} \Phi_0 (\mathbf{x}_0)) = d_{R \mathbf{n}_0} \mathcal{E}_0 (\mathbf{x}_0)$. Consequently, we have $\gamma^{\mathfrak{g}_0}_{\mathbf{n}_0} (R)$ is a focal point of $N_0$ with respect to $\mathfrak{g}_0$. But then again by \autoref{lemma:injectivityRadiusCharacterization}, we have $\mathrm{Inj}(N, \mathfrak{g}_0) \le R$.
    \end{description}
    Thus, we have $\lim\inf\mathrm{Inj}(N_i,\mathfrak{g}_i) = R \ge \mathrm{Inj}(N_0, \mathfrak{g}_0)$, as required.
\end{proof}

Let us now denote $\mathrm{Emb} (N, M)$ to be the space of embeddings, and $\mathcal{G}$ to be the collection of Riemannian metrics on $M$, both equipped with the Whitney $C^2$ topology. We have the following.

\begin{theorem}\label{thm:injectivityRadiusContinuous}
    The injectivity radius map
    \begin{align*}
        \mathrm{Inj} : \mathrm{Emb}(N, M) \times \mathcal{G} &\rightarrow \mathbb{R} \\
        (f, \mathfrak{g}) &\mapsto \mathrm{Inj} \left( f(N), \mathfrak{g} \right)
    \end{align*}
    is continuous.
\end{theorem}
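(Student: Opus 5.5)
The plan is to deduce continuity at an arbitrary point $(f_0, \mathfrak{g}_0) \in \mathrm{Emb}(N, M) \times \mathcal{G}$ from \autoref{lemma:injectivityRadiusLimSup} and \autoref{lemma:injectivityRadiusLimInf}. These lemmas are stated for sequences, so I first reduce to sequential continuity.

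Since $M$ and $N$ are compact, the Whitney $C^2$ topology on $\mathrm{Emb}(N, M)$ and on $\mathcal{G}$ agrees with the ordinary (weak) $C^2$ topology. The latter is metrizable: for instance, use the sup of $C^2$-distances of jets, computed in finitely many charts with respect to a fixed background metric. Hence the product $\mathrm{Emb}(N, M) \times \mathcal{G}$ is metrizable, and continuity of $\mathrm{Inj}$ is equivalent to sequential continuity. This is the only point requiring care, because in general the Whitney topology is not first countable on noncompact domains. Here compactness of both $N$ and $M$ removes that issue. Convergence $f_i \to f_0$ and $\mathfrak{g}_i \to \mathfrak{g}_0$ in this topology means precisely that the $2$-jets converge uniformly. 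That is exactly the hypothesis $f_i \xrightarrow{C^2} f_0$, $\mathfrak{g}_i \xrightarrow{C^2} \mathfrak{g}_0$ under which the two lemmas were proved.

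Now take any sequence $(f_i, \mathfrak{g}_i) \to (f_0, \mathfrak{g}_0)$ and set $N_i = f_i(N)$. Convergence in $C^2$ implies convergence in $C^1$, so \autoref{lemma:injectivityRadiusLimSup} gives
\[ \limsup_i \mathrm{Inj}(N_i, \mathfrak{g}_i) \le \mathrm{Inj}(N_0, \mathfrak{g}_0). \]
\autoref{lemma:injectivityRadiusLimInf} gives
\[ \liminf_i \mathrm{Inj}(N_i, \mathfrak{g}_i) \ge \mathrm{Inj}(N_0, \mathfrak{g}_0). \]
Both quantities are finite, since $M$ is compact and the injectivity radius is bounded by the diameter. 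Hence the limit exists and equals $\mathrm{Inj}(N_0, \mathfrak{g}_0)$, which proves sequential continuity and therefore continuity.

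One must also check that the lemmas apply to arbitrary sequences. Their proofs repeatedly pass to subsequences, but they only do so to realize the $\limsup$ or $\liminf$ as a limit, and so the conclusions hold for the full sequence. The proofs also use the tubular-neighborhood identification of $N_i$ with sections $s_i$ of $\nu(N_0, \mathfrak{g}_0)$, which exists only for $i$ large. This is harmless, since discarding finitely many terms does not affect $\limsup$ or $\liminf$.

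I expect no substantive obstacle. The analytic content lies in the two lemmas, and the remaining step is the topological reduction to sequences via metrizability, justified by compactness.
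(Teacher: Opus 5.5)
Your proposal is correct and follows the paper's proof: you reduce continuity to sequential continuity via metrizability of $\mathrm{Emb}(N, M) \times \mathcal{G}$, then combine \autoref{lemma:injectivityRadiusLimSup} (using that $C^2$ convergence implies $C^1$ convergence) with \autoref{lemma:injectivityRadiusLimInf}. Your extra remarks make explicit what the paper leaves implicit: that the Whitney topology is metrizable here because $N$ and $M$ are compact, and that the subsequence extractions inside the lemmas are harmless.
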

\begin{proof}
    Since $\mathrm{Emb}(N, M)$ and $\mathcal{G}$ are metric spaces, it is enough to check that $\lim \mathrm{Inj} \left( f_i (N), \mathfrak{g}_i \right) = \mathrm{Inj} (N_0, \mathfrak{g}_0)$, for any sequence of embeddings $f_i \xrightarrow{C^2} f_0$ and metrics $\mathfrak{g}_i \xrightarrow{C^2} \mathfrak{g}_0$. But this is immediate from \autoref{lemma:injectivityRadiusLimSup} and \autoref{lemma:injectivityRadiusLimInf}.
\end{proof}

Note that when $N$ is a singleton, $C^2$ convergence is a superfluous condition. Thus, we recover the following.

\begin{corollary}[{\cite[Theorem 2, Section 7]{ehrlichInjectivityCont}}]\label{cor:injectivityRadiusOfPointContinuous}
    Let $M$ be a compact manifold without boundary. For $i \ge 0$, suppose $\mathfrak{g}_i$ is a sequence of Riemannian metrics on $M$, such that $\mathfrak{g}_i \xrightarrow{C^2} \mathfrak{g}_0$. Then, for any convergent sequence of points $p_i \rightarrow p_0$ in $M$ we have $\lim \mathrm{Inj}(p_i, \mathfrak{g}_i) = \mathrm{Inj}(p_0, \mathfrak{g}_0)$.
\end{corollary}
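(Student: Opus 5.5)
The corollary is the special case of \autoref{thm:injectivityRadiusContinuous} in which $N$ is a single point. Take $N = \{\ast\}$ and embeddings $f_i : \{\ast\} \hookrightarrow M$, $f_i(\ast) = p_i$, so that $N_i = \{p_i\}$. The main task is to check that the hypotheses on the embeddings used in \autoref{lemma:injectivityRadiusLimSup} and \autoref{lemma:injectivityRadiusLimInf} reduce to $p_i \to p_0$. A map from a zero-dimensional manifold has no derivatives, so its $k$-jet is just its value. Hence $C^0$, $C^1$ and $C^2$ convergence of $f_i$ to $f_0$ all coincide with $p_i \rightarrow p_0$, and the Whitney $C^2$ condition on embeddings is automatic.

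Next we check that the objects in this section specialize correctly. The normal bundle is $\nu(\{p\}, \mathfrak{g}) = T_pM$, so $\mathcal{E}_i$ is $\exp_{\mathfrak{g}_i}|_{T_{p_i}M}$. $N$-segments are minimizing geodesics from $p$. Focal points of $\{p\}$ are conjugate points of $p$. $N$-geodesic loops are geodesic loops at $p$: the condition $\dot\gamma(\ell) \in \nu^1$ is vacuous, since every unit vector is normal to a point. The cut locus $\mathrm{Cut}(\{p\}, \mathfrak{g})$ is the usual cut locus of $p$, so $\mathrm{Inj}(\{p\}, \mathfrak{g}) = \mathrm{Inj}(p, \mathfrak{g})$.

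The proofs of \autoref{prop:convergenceConsequences} and of the two lemmas then go through verbatim. The bundle isomorphisms $\Psi_i$, built from the tubular neighborhood, become linear isomorphisms $T_{p_0}M \to T_{p_i}M$, for instance from a coordinate chart around $p_0$. These converge in $C^1$ to the identity as $p_i \to p_0$, since they depend smoothly on the base point. The principal curvature bound of \autoref{prop:convergenceConsequences}\,(2) is trivial, because $TN = 0$ and all shape operators vanish. So in \eqref{eq:warnerNoFocal} we may take any $\Delta > 0$, or simply use the classical Rauch bound $\pi/\sqrt{K}$ for the absence of conjugate points. The only genuine analytic input is $\mathfrak{g}_i \xrightarrow{C^2} \mathfrak{g}_0$, which gives $\exp_{\mathfrak{g}_i} \xrightarrow{C^1} \exp_{\mathfrak{g}_0}$, and this is assumed.

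Applying \autoref{thm:injectivityRadiusContinuous}, or directly combining \autoref{lemma:injectivityRadiusLimSup} and \autoref{lemma:injectivityRadiusLimInf}, gives $\lim \mathrm{Inj}(p_i, \mathfrak{g}_i) = \mathrm{Inj}(p_0, \mathfrak{g}_0)$. The one point needing care is the step where the section works with a fixed compact $N$ and the normal-bundle identifications. That step is harmless here, since a point is compact and all identifications become trivial.
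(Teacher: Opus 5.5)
Your proposal is correct and is the paper's argument: the paper obtains the corollary by specializing \autoref{thm:injectivityRadiusContinuous} to $N$ a single point, where $C^2$ convergence of the embeddings reduces to $p_i \to p_0$. Your additional checks (normal bundle $T_pM$, focal points becoming conjugate points, $N$-geodesic loops becoming geodesic loops at $p$, vanishing shape operators) spell out the verification the paper leaves implicit.
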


%% file: sec_cut_locus.tex
\section{Stability of the Cut Locus of Embedded Submanifold}
Let us fix some auxiliary Riemannian metric $\mathfrak{h}$ on $M$, and consider the \emph{Hausdorff distance} on $\mathcal{K}$, the collection of closed (and hence compact) subsets of $M$. Given $A, B \subset M$ closed, we define 
\begin{equation}\label{eq:hausdorffDistance}
    d_{\mathsf{H}} (A, B) \coloneqq \max\left\{ \sup_{a \in A} d_{\mathfrak{h}} (a, B), \; \sup_{b \in B} d_{\mathfrak{h}}(A, b) \right\}.
\end{equation}
It follows that $d_{\mathsf{H}}$ is a metric on $\mathcal{K}$, which induces the same topology independent of the choice of $\mathfrak{h}$. Given an embedding $f : N \hookrightarrow M$ and a metric $\mathfrak{g} \in \mathcal{G}$, recall that the cut locus $\mathrm{Cut}(f(N), \mathfrak{g}) \subset M$ is closed, since $\mathrm{Cut}\left( f(N), \mathfrak{g} \right) = \overline{\mathrm{Sep}\left( f(N), \mathfrak{g} \right)}$. The goal of this section is to prove the following, which is an immediate generalization of \cite[Theorem 3.1]{bhowmickItohPrasadCutLocusStability}

\begin{theorem}\label{thm:stabilityOfCutLocus}
    Suppose $N, M$ are compact manifolds without boundary. Then, the map 
    \begin{align*}
        \mathrm{Cut} : \mathrm{Emb}(N, M) \times \mathcal{G} &\rightarrow \mathcal{K} \\
        (f, \mathfrak{g}) &\mapsto \mathrm{Cut}\left( f(N), \mathfrak{g} \right)
    \end{align*}
    is continuous.
\end{theorem}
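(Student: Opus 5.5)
We argue with sequences, as in \autoref{thm:injectivityRadiusContinuous}. Take $f_i \xrightarrow{C^2} f_0$ and $\mathfrak{g}_i \xrightarrow{C^2} \mathfrak{g}_0$, and write $C_i \coloneqq \mathrm{Cut}(N_i, \mathfrak{g}_i)$. Hausdorff convergence $C_i \to C_0$ in the compact space $M$ is equivalent to two statements:
\begin{itemize}
    \item[(a)] every limit point of a sequence $q_i \in C_i$ lies in $C_0$;
    \item[(b)] every $q \in C_0$ is a limit of points $q_i \in C_i$.
\end{itemize}
We adapt the scheme of \cite[Theorem 3.1]{bhowmickItohPrasadCutLocusStability}. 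The inputs are the uniform convergence $u_i \to u_0$ of the distance functions from \autoref{prop:convergenceConsequences}~(3), and the continuity of $\mathrm{Inj}$, which gives a uniform positive lower bound $\mathrm{Inj}(N_i,\mathfrak{g}_i)\ge r>0$ for all large $i$.

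\textbf{Part (a).} Let $q_i \in C_i$ with $q_i \to q_0$. By \eqref{eq:cutLocusSepSet} we may perturb each $q_i$ slightly so that $q_i \in \mathrm{Sep}(N_i, \mathfrak{g}_i)$. Then there are two distinct $N_i$-segments with initial vectors $\mathbf{n}_i^1 \neq \mathbf{n}_i^2$, both of length $t_i = u_i(q_i) \ge r$. Pass to subsequences so that $\mathbf{n}_i^j \to \mathbf{n}_0^j \in \nu^1(N_0,\mathfrak{g}_0)$, as in the proof of \autoref{lemma:injectivityRadiusLimInf}, and $t_i \to t_0 = u_0(q_0) \ge r$. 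By the geodesic convergence \cite[Lemma 1.5]{sakaiInjectivityCont} and \autoref{prop:convergenceConsequences}~(3), both limit geodesics are $N_0$-segments to $q_0$.
\begin{itemize}
    \item If $\mathbf{n}_0^1 \ne \mathbf{n}_0^2$, then $q_0 \in \mathrm{Sep}(N_0,\mathfrak{g}_0)\subset C_0$.
    \item If $\mathbf{n}_0^1 = \mathbf{n}_0^2 \eqqcolon \mathbf{n}_0$, then $t_i\mathbf{n}_i^1$ and $t_i\mathbf{n}_i^2$ have the same image under $\mathcal{E}_i$ and converge to $t_0\mathbf{n}_0$. By \autoref{prop:convergenceConsequences}~(1), $t_0\mathbf{n}_0$ is a critical point of $\mathcal{E}_0$, so $q_0$ is a focal point along an $N_0$-segment. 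Hence $\rho_{\mathfrak{g}_0}(N_0,\mathbf{n}_0) = t_0$, and again $q_0 \in C_0$.
\end{itemize}

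\textbf{Part (b).} We argue by contradiction. Suppose $q\in C_0$ and $d_{\mathfrak{h}}(q, C_i)\ge \delta$ along a subsequence. Again by \eqref{eq:cutLocusSepSet} we may assume $q \in \mathrm{Sep}(N_0,\mathfrak{g}_0)$, with two distinct $N_0$-segments with initial vectors $\mathbf{n}^1 \ne \mathbf{n}^2$ ending at $q$. On the ball $B = B_{\mathfrak{h}}(q,\delta)$, which misses $C_i$, the function $u_i$ is smooth away from $N_i$ (indeed $u_i(q) \to u_0(q) > 0$), and each point of $B$ has a unique $N_i$-segment, which depends continuously on the point.

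Consider the normalized gradient $\nabla^{\mathfrak{g}_i} u_i$. It is the velocity field of these segments and is a unit vector field. Using the uniform convergence $u_i\to u_0$ together with the semiconcavity argument of \cite{albanoCutLocusStability,bhowmickItohPrasadCutLocusStability}, limits of the $u_i$-gradients at points near $q$ must lie in the superdifferential of $u_0$. Following the reference, I would instead make the argument concrete as follows.
\begin{itemize}
    \item Pick points $x^j$ on the two segments at distance slightly less than $u_0(q)$. These are interior, non-cut points, and $u_0$ is differentiable there with gradient equal to the segment velocity.
    \item Because the $N_i$-segments through points of $B$ vary continuously and $B$ is connected, the limit of the $N_i$-segments to $q$ would, as in part (a), be a single $N_0$-segment $\sigma$ to $q$.
    \item On the other hand, the $N_i$-segments through points near $x^j$ converge to the $\mathfrak{g}_0$-segment through $x^j$. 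This uses uniqueness of segments at non-cut points together with part (a)-type limits.
    \item Continuity of the segment field on the segment from $x^j$ to $q$ inside $B$ forces both of these segments to coincide with $\sigma$. This contradicts $\mathbf{n}^1\neq\mathbf{n}^2$.
\end{itemize}

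The main obstacle is part (b). In particular, making the last step rigorous requires controlling the $N_i$-segments uniformly on $B$, and showing that segments to nearby points cannot jump between the two limiting directions. I would first try the superdifferential / semiconcavity route from \cite{albanoCutLocusStability}: uniform $C^2$ bounds give uniform semiconcavity of the $u_i$ near $q$, and then a function that is differentiable on $B$ (hence $C^1$ there, being a uniform limit of semiconcave functions with single-valued gradients) cannot converge uniformly to $u_0$, which has a nontrivial superdifferential at $q$. The uniform curvature bounds from \autoref{prop:convergenceConsequences}~(2) and \eqref{eq:warnerNoFocal} supply the required uniform semiconcavity constants away from $N_i$.
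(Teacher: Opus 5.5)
Your Part (a) is correct and is essentially the paper's \autoref{lemma:conv_2}; you phrase it directly, splitting on whether the two limit directions coincide, rather than by contradiction.

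Part (b), however, has a genuine gap: neither of your two routes reaches a contradiction. In the ``concrete'' route, the final step relies on continuity of the segment field $\nabla^{\mathfrak{g}_i}u_i$ on $B$ for each fixed $i$. You have no modulus of continuity that is uniform in $i$, so nothing stops the limiting field from being discontinuous at $q$. For instance, $\sigma$ could be the segment with initial vector $\mathbf{n}^1$, while the fields near $x^2$ converge to the segment with initial vector $\mathbf{n}^2$. Equicontinuity is exactly what is at stake, and you do not supply it. In the semiconcavity route, the closing claim is false as stated. Upper semicontinuity of superdifferentials only says that limits of $du_i(x_i)$ with $x_i\to q$ lie in $D^+u_0(q)$. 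The two unit arrival covectors already lie there, so no contradiction arises. Indeed, the functions $-\sqrt{|x|^2+1/i}$ are smooth and concave, and they converge uniformly to $-|x|$, which has a nontrivial superdifferential at $0$. Uniform semiconcavity of the $u_i$, and the curvature bounds you invoke to obtain it, are not the relevant input.

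What is missing is the eikonal identity $\lVert du_i\rVert_{\mathfrak{g}_i}=1$ on $B$, which the example above violates, exploited through a touching-from-above argument. Since $q\in\mathrm{Sep}(N_0,\mathfrak{g}_0)$, the midpoint of the two distinct unit arrival covectors lies in the convex set $D^+u_0(q)$ and has norm $<1$. Hence there is a $C^1$ function $\phi$ with $\lVert d_q\phi\rVert_{\mathfrak{g}_0}<1$ such that $u_0-\phi$ has a local maximum at $q$; you can make it strict by adding a small quadratic term (cf.\ \autoref{prop:separatingPointCharacterization}). Uniform convergence $u_i\to u_0$ then yields local maxima $x_i\to q$ of $u_i-\phi$. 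Because $u_i$ is differentiable at $x_i\in B$, you get $du_i(x_i)=d_{x_i}\phi$, whose $\mathfrak{g}_i$-norm tends to $\lVert d_q\phi\rVert_{\mathfrak{g}_0}<1$. This contradicts $\lVert du_i\rVert_{\mathfrak{g}_i}=1$.

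This is the paper's \autoref{lemma:conv_1}, run positively rather than by contradiction. The paper uses the uniform bound $\delta<\mathrm{Inj}(N_i,\mathfrak{g}_i)$ to replace $N_i$ by the smooth hypersurfaces $\partial\Omega_i=\{d_{\mathfrak{g}_i}(N_i,\cdot)=\delta\}$, so that \autoref{prop:separatingPointCharacterization} applies to every $u_i$. It then concludes directly that the perturbed maxima $x_i$ are separating points of $N_i$. Alternatively, your concrete route could be repaired by proving uniform semi\emph{convexity} of the $u_i$ near $q$. Segments through points of $B$ stay minimizing until they leave $B$, which gives uniform lower support functions; then $u_0$ would be $C^{1,1}$ near $q$, contradicting $q$ being a separating point. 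That, however, is a substantial step you did not supply.
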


The proof follows almost verbatim as in \cite{bhowmickItohPrasadCutLocusStability}. For the reader’s convenience, we sketch the main steps. Firstly, note that since we are working with metric spaces, it is enough to consider converging sequences $f_i \xrightarrow{C^2} f_0, \; \mathfrak{g}_i \xrightarrow{C^2} \mathfrak{g}_0$, and prove that 
\[\lim d_{\mathsf{H}} (\mathrm{Cut}(N_i, \mathfrak{g}_i), \mathrm{Cut}(N_0, \mathfrak{g}_0)) = 0,\]
where $N_i = f_i (N) \subset M$. Recall the characterization of convergence in the Hausdorff metric.
\begin{prop}[{\cite[Lemma 2.1]{bhowmickItohPrasadCutLocusStability}}]\label{prop:hausdorffConvergence}
    Let $(M, d)$ be a compact metric space, and $X_i \subset M$ are closed subsets. Suppose $d_{\mathsf{H}}$ is the Hausdorff metric induced by $d$. Then, $\lim d_{\mathsf{H}}(X_i, X) = 0$ if and only if the following holds:
    \begin{enumerate}
        \item for any $x_0 \in X_0$ there exists a sequence $x_i \in X_i$ with $\lim x_i = x_0$, and
        \item for any convergent sequence $x_{i_j} \in X_{i_j}$ we have $\lim x_{i_j} \in X_0$.
    \end{enumerate}
\end{prop}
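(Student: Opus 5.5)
The plan is to prove both implications directly from the definition \eqref{eq:hausdorffDistance} of $d_{\mathsf{H}}$, using only compactness of $(M,d)$ and closedness of the sets. I will tacitly assume that all the $X_i$ and $X_0$ are nonempty, as is implicit in the statement; otherwise $d_{\mathsf{H}}$ is not a genuine metric. Closed subsets of the compact space $M$ are compact, so every infimum $d(x, X_i)$ is attained at some point of $X_i$, and every sequence in $M$ has a convergent subsequence. These two facts are what I will use throughout.

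\emph{Forward direction.} Assume $d_{\mathsf{H}}(X_i, X_0) \to 0$.
\begin{itemize}
\item For (1), fix $x_0 \in X_0$ and choose $x_i \in X_i$ with $d(x_i, x_0) = d(X_i, x_0)$. Then $d(x_i, x_0) \le d_{\mathsf{H}}(X_i, X_0) \to 0$.
\item For (2), let $x_{i_j} \in X_{i_j}$ with $x_{i_j} \to x$. Since $x \mapsto d(x, X_0)$ is $1$-Lipschitz, we get $d(x, X_0) = \lim_j d(x_{i_j}, X_0) \le \lim_j d_{\mathsf{H}}(X_{i_j}, X_0) = 0$. As $X_0$ is closed, $x \in X_0$.
\end{itemize}

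\emph{Reverse direction.} I argue by contradiction. Suppose (1) and (2) hold but $d_{\mathsf{H}}(X_i, X_0) \not\to 0$. Then there are an $\varepsilon > 0$ and a subsequence along which $d_{\mathsf{H}}(X_{i_j}, X_0) \ge \varepsilon$. Passing to a further subsequence, one of the two suprema in \eqref{eq:hausdorffDistance} is $\ge \varepsilon$ for all $j$, which gives two cases.
\begin{itemize}
\item \emph{First supremum large.} By compactness of $X_{i_j}$ the supremum is attained, so we can pick $a_j \in X_{i_j}$ with $d(a_j, X_0) \ge \varepsilon$. Extract a convergent subsequence $a_j \to a$. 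By (2), $a \in X_0$. But continuity of $d(\cdot, X_0)$ gives $d(a, X_0) \ge \varepsilon$, a contradiction.
\item \emph{Second supremum large.} Pick $b_j \in X_0$ with $d(X_{i_j}, b_j) \ge \varepsilon$. Pass to a subsequence with $b_j \to b$; then $b \in X_0$ because $X_0$ is closed. By (1) there are $x_i \in X_i$ with $x_i \to b$. The triangle inequality gives
\[ d(X_{i_j}, b_j) \le d(x_{i_j}, b) + d(b, b_j) \to 0, \]
a contradiction.
\end{itemize}

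The only step needing care is this last case. Condition (1) is pointwise in $x_0$, whereas $d_{\mathsf{H}}$ requires control uniform over $X_0$. Compactness of $X_0$ bridges the gap: it lets us replace the varying points $b_j$ by a single limit point $b$ and then apply (1) at $b$. Everything else is routine bookkeeping with subsequences.
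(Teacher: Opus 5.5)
Your proof is correct and complete. Both implications follow directly from \eqref{eq:hausdorffDistance}, and you handle the only delicate step properly: you turn the pointwise condition (1) into control that is uniform over $X_0$ by passing to a limit point $b$ of the $b_j$ and then applying (1) at $b$. You also apply (2) correctly along the sub-subsequence in the first case.

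The paper does not prove this proposition itself; it quotes it from \cite[Lemma 2.1]{bhowmickItohPrasadCutLocusStability}, so there is no in-paper argument to compare against. Yours is the standard proof, and reading $X$ as $X_0$ and assuming all the sets are nonempty are the intended interpretation.
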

Next, let us recall the following useful characterization of a separating point of a hypersurface which bounds an open set.
\begin{prop}[{\cite[Proposition 2.10]{bhowmickItohPrasadCutLocusStability}}]\label{prop:separatingPointCharacterization}
    Fix a metric $\mathfrak{h}$ on $M$, and let $\Omega \subset M$ be an open set, such that the (topological) boundary $\partial \Omega$ is a smooth hypersurface. Denote the distance function $u = d_{\mathfrak{h}}(\partial \Omega, \_)$. Then, $x \in \Omega$ is a separating point of $\partial \Omega$ if and only if there exists a $C^1$-function, say, $\phi$ defined locally near $x$ such that
    \begin{itemize}
        \item $u - \phi$ attains a local maximum at $x$, and
        \item $\left\lVert d_x \phi \right\rVert_{\mathfrak{h}} < 1$.
    \end{itemize}
\end{prop}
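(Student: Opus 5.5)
The plan is to prove the two implications separately. Write $\ell \coloneqq u(x) > 0$. The forward direction will come from building an explicit smooth upper support function. The converse will come from differentiability of $u$ at points joined to $\partial\Omega$ by a single segment. Throughout, an $N$-segment ending at $x$ is a unit-speed $\mathfrak{h}$-geodesic $\gamma : [0,\ell] \to M$ with $\gamma(0) \in \partial\Omega$ and $\gamma(\ell) = x$.

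\emph{($\Rightarrow$)} Suppose $x$ is a separating point, with two distinct $\partial\Omega$-segments $\gamma_1, \gamma_2$ ending at $x$. Their terminal velocities $\mathbf{w}_j \coloneqq \dot\gamma_j(\ell)$ must be distinct. Otherwise the reversed geodesics would coincide by uniqueness of geodesics, and then $\gamma_1 = \gamma_2$.
\begin{itemize}
    \item Fix a small $\delta > 0$ and set $y_j \coloneqq \gamma_j(\ell - \delta)$. For $\delta$ less than the injectivity radius of $\mathfrak{h}$ at $x$, the function $\psi_j \coloneqq (\ell - \delta) + d_{\mathfrak{h}}(y_j, \_)$ is smooth near $x$. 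Its differential at $x$ is $\mathfrak{h}(\mathbf{w}_j, \_)$.
    \item By the triangle inequality, $u \le \psi_j$ near $x$, with equality at $x$.
    \item Take $\phi \coloneqq \tfrac12(\psi_1 + \psi_2)$. Then $u \le \min(\psi_1, \psi_2) \le \phi$ near $x$, with equality at $x$. Hence $u - \phi$ attains a local maximum $0$ at $x$.
    \item Moreover $d_x\phi = \tfrac12 \mathfrak{h}(\mathbf{w}_1 + \mathbf{w}_2, \_)$. Since $\mathbf{w}_1 \ne \mathbf{w}_2$ are unit vectors, strict convexity of the norm gives $\lVert d_x\phi\rVert_{\mathfrak{h}} < 1$.
\end{itemize}
This direction is routine.

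\emph{($\Leftarrow$)} Suppose $x \notin \mathrm{Sep}(\partial\Omega, \mathfrak{h})$, so there is a unique segment $\gamma$ to $x$, with terminal velocity $\mathbf{w}$. The goal is to show that $u$ is differentiable at $x$ with $d_xu = \mathfrak{h}(\mathbf{w}, \_)$. Given that, if $u - \phi$ has a local maximum at $x$ with $\phi \in C^1$, then $d_x\phi = d_xu$, which has norm exactly $1$, contradicting the second condition.

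To get differentiability I would argue in two steps.
\begin{enumerate}
    \item \textbf{Semiconcavity.} The construction in ($\Rightarrow$), applied with uniform $\delta$, shows that near $x$ every point $z$ has a smooth upper support function for $u$. Its differential is $\mathfrak{h}(\dot\eta(\ell_z), \_)$ for a segment $\eta$ to $z$, and its Hessian is bounded uniformly in $z$ by compactness. Hence $u$ is locally semiconcave near $x$.
    \item \textbf{Reachable gradients.} For a semiconcave function, the superdifferential at $x$ is the convex hull of the reachable gradients, i.e.\ limits of $d_{z_k}u$ along $z_k \to x$ at points of differentiability. At such $z_k$ the (necessarily unique) segments $\eta_k$ satisfy $d_{z_k}u = \mathfrak{h}(\dot\eta_k(\ell_{z_k}), \_)$. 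By compactness of the unit normal bundle of $\partial\Omega$ and continuity of the geodesic flow, a subsequence of the $\eta_k$ converges to a segment ending at $x$, which must be $\gamma$. So every reachable gradient equals $\mathfrak{h}(\mathbf{w}, \_)$.
\end{enumerate}
The superdifferential is then a singleton, so $u$ is differentiable at $x$.

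The main obstacle is this converse direction, specifically establishing local semiconcavity with uniform constants and the identification of the superdifferential with the convex hull of limiting segment directions. Three points need care:
\begin{itemize}
    \item keeping $\delta$ uniform near $x$, which is available because $x$ stays a definite distance from $\partial\Omega$;
    \item checking that the limit of segments is again a segment, using continuity of $u$;
    \item checking that segments start at $\partial\Omega$ with inward normal direction, which holds because $\partial\Omega$ is a smooth hypersurface bounding $\Omega$ and $x \in \Omega$.
\end{itemize}
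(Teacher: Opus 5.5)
Your proof is correct, and it follows a more hands-on route than the paper. The paper gives no proof here: it cites \cite[Proposition 2.10]{bhowmickItohPrasadCutLocusStability} and, in \autoref{rmk:viscositySolution}, attributes the statement to $u$ being the viscosity solution of the eikonal equation $\lVert du\rVert_{\mathfrak{h}}=1$. The viscosity route works with the superdifferential $D^+u(x)$, the set of differentials of $C^1$ functions touching $u$ from above at $x$. For a distance function, $D^+u(x)$ is the convex hull of the unit covectors $\mathfrak{h}(\dot\gamma(\ell),\_)$ over all segments $\gamma$ ending at $x$. Strict convexity of the unit ball of $\mathfrak{h}$ then turns two distinct segments into an element of norm $<1$, while a unique segment makes $D^+u(x)$ a single unit covector. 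Your converse is exactly this mechanism proved by hand: uniform barriers give semiconcavity, and reachable gradients give differentiability at non-separating points. Your forward direction avoids nonsmooth analysis altogether. Averaging the barriers $u(y_j)+d_{\mathfrak{h}}(y_j,\_)$ gives an explicit smooth test function with $\lVert d_x\phi\rVert_{\mathfrak{h}}=\tfrac12\lVert\mathbf{w}_1+\mathbf{w}_2\rVert_{\mathfrak{h}}$, so you never need the general fact that every element of $D^+u(x)$ is realized by a $C^1$ test function. The viscosity phrasing fits naturally with \autoref{lemma:conv_1}, where the test-function condition is exactly what survives uniform limits; your phrasing makes the geometric inputs transparent. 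In particular, your argument never uses that $\partial\Omega$ is a hypersurface bounding $\Omega$, since a limit of unit normals is a unit normal for any closed submanifold. So it proves the characterization for $d_{\mathfrak{h}}(N,\_)$ on $M\setminus N$ for any compact submanifold $N$. In \autoref{lemma:conv_1} this would let one work with $N_i$ directly, without the auxiliary sets $\Omega_i$ or the uniform bound $\delta<\mathrm{Inj}(N_i,\mathfrak{g}_i)$.
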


\begin{remark}\label{rmk:viscositySolution}
    The above characterization is a consequence of the fact that the distance function $u = d_{\mathfrak{h}}(\partial \Omega, \_)$ is the unique positive \emph{viscosity solution} of the \emph{eikonal equation} $\left\lVert d u \right\rVert_{\mathfrak{h}} = 1$ with the boundary condition $u|_{\partial \Omega} = 0$ \cite[Theorem 3.1]{mantegazzaViscosityManifold}.
\end{remark}

We now prove the following two lemmas.

\begin{lemma}\label{lemma:conv_1}
    Suppose $x_0 \in \mathrm{Cut}(N_0, \mathfrak{g}_0)$ is a point. Then there are points $x_i \in \mathrm{Cut}(N_i, \mathfrak{g}_i)$ such that $\lim x_i = x_0$.
\end{lemma}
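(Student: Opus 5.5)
Since $\mathrm{Cut}(N_0, \mathfrak{g}_0) = \overline{\mathrm{Sep}(N_0, \mathfrak{g}_0)}$ by \eqref{eq:cutLocusSepSet}, a diagonal argument reduces the lemma to the case $x_0 \in \mathrm{Sep}(N_0, \mathfrak{g}_0)$. Indeed, the set of limits of sequences $x_i \in \mathrm{Cut}(N_i,\mathfrak{g}_i)$ is closed. So we fix a separating point $x_0$ and argue by contradiction. Suppose there is some $r > 0$ and a subsequence (not relabelled) with $B_{\mathfrak{h}}(x_0, r) \cap \mathrm{Cut}(N_i, \mathfrak{g}_i) = \emptyset$ for all $i$. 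Shrinking $r$, we may assume that $\overline{B}(x_0,r) \cap N_i = \emptyset$ for $i$ large. This is possible since $x_0 \notin N_0$ and $N_i \to N_0$ in $C^0$.

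\textbf{Step 1: the $u_i$ are smooth on the ball.} Away from the cut locus and from $N_i$, the function $u_i = d_{\mathfrak{g}_i}(N_i, \_)$ is smooth and satisfies $\lVert du_i \rVert_{\mathfrak{g}_i} = 1$. Moreover, through each $y \in B(x_0,r)$ there passes a unique $N_i$-segment, and $\nabla u_i$ is its velocity. By \autoref{prop:convergenceConsequences} \hyperref[prop:convergenceConsequences:distance]{(3)} we have $u_i \to u_0$ uniformly.

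\textbf{Step 2: viscosity contradiction.} Since $x_0$ is a separating point, the analogue of \autoref{prop:separatingPointCharacterization} gives a $C^1$ function $\phi$ near $x_0$ with $u_0 - \phi$ having a local maximum at $x_0$ and $\lVert d_{x_0}\phi\rVert_{\mathfrak{g}_0} < 1$. The proof of that direction is local: one takes two distinct segments and uses $\phi$ touching $u_0$ from above at a concave corner. Alternatively, one can apply \autoref{prop:separatingPointCharacterization} to the boundary of a tubular neighbourhood, $\partial\{u_0 < \delta\}$ for small $\delta$, which is a smooth hypersurface; the distance to it is $u_0 - \delta$, and separating points beyond it coincide with separating points of $N_0$.

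We make the maximum strict by replacing $\phi$ with $\phi + |x-x_0|^2_{\mathfrak h}$. Uniform convergence then gives points $y_i \to x_0$ at which $u_i - \phi$ has a local maximum. Since $u_i$ is differentiable there, we get $d_{y_i} u_i = d_{y_i}\phi$, so
\[1 = \lVert d_{y_i}u_i\rVert_{\mathfrak{g}_i} = \lVert d_{y_i}\phi\rVert_{\mathfrak{g}_i} \to \lVert d_{x_0}\phi\rVert_{\mathfrak{g}_0} < 1,\]
which is a contradiction. This uses the continuity of $d\phi$ and the $C^0$ convergence $\mathfrak{g}_i \to \mathfrak{g}_0$.

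\textbf{Main obstacle.} The delicate point is justifying the touching-function characterization at $x_0$ for a submanifold $N_0$ of arbitrary codimension, since \autoref{prop:separatingPointCharacterization} is stated only for hypersurfaces bounding open sets. I would first try the tubular-neighbourhood trick above. If that proves awkward, I would instead argue directly. Take two distinct $N_0$-segments to $x_0$ with velocities $\mathbf{w}_1 \neq \mathbf{w}_2$ at $x_0$. Then $u_0$ is semiconcave near $x_0$, being bounded above by the smooth distance-to-a-point functions along each segment, so its superdifferential at $x_0$ contains $\mathbf{w}_1^\flat$ and $\mathbf{w}_2^\flat$. Hence it contains their midpoint, which has norm $<1$, and a $C^1$ function $\phi$ with that differential touches $u_0$ from above at $x_0$. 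Only this direction of the characterization is needed for the contradiction.
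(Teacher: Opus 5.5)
Your proof is correct. It shares the paper's core mechanism: a $C^1$ test function $\phi$ touches $u_0$ from above at $x_0$ with $\lVert d_{x_0}\phi\rVert_{\mathfrak{g}_0}<1$, and uniform convergence $u_i\to u_0$ moves the maximum to nearby points $y_i$. The way the argument is organized, however, is genuinely different.

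\textbf{The paper's route.} The paper argues directly. It first invokes \autoref{thm:injectivityRadiusContinuous} to get one $\delta<\mathrm{Inj}(N_i,\mathfrak{g}_i)$ valid for all $i$, so that every $\partial\Omega_i$ is a smooth hypersurface. It then uses \emph{both} directions of \autoref{prop:separatingPointCharacterization}: the ``only if'' direction at $x_0$, and the ``if'' direction at each $x_i$, which places $x_i$ in $\mathrm{Sep}(N_i,\mathfrak{g}_i)$.

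\textbf{Your route.} You argue by contradiction. For each $N_i$ you replace the ``if'' direction by the classical fact that $u_i$ is smooth with $\lVert du_i\rVert_{\mathfrak{g}_i}=1$ off $N_i\cup\mathrm{Cut}(N_i,\mathfrak{g}_i)$. So you need the touching characterization only for $(N_0,\mathfrak{g}_0)$. You can get it either from the tubular trick with $\delta<\mathrm{Inj}(N_0,\mathfrak{g}_0)$ alone, or directly. For the direct version, semiconcavity and superdifferential convexity are more than you need: simply take $\phi=\tfrac12(\phi_1+\phi_2)$ with $\phi_j=\varepsilon+d_{\mathfrak{g}_0}(\gamma_j(\varepsilon),\_)$. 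These are smooth upper barriers near $x_0$, because $x_0$ is not a cut point of $\gamma_j(\varepsilon)$.

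\textbf{What each route buys.} Your argument never uses the continuity of the injectivity radius, nor even a uniform positive lower bound for it. It needs only $u_i\to u_0$ uniformly and $\mathfrak{g}_i\to\mathfrak{g}_0$ in $C^0$. Hence this half of the Hausdorff convergence already holds under $C^0$ convergence of $f_i$ and $\mathfrak{g}_i$; the stronger hypotheses are needed only for the other inclusion, \autoref{lemma:conv_2}. The paper's route, in exchange, is constructive and lands directly in $\mathrm{Sep}(N_i,\mathfrak{g}_i)$. That gains nothing essential here, since $\mathrm{Cut}=\overline{\mathrm{Sep}}$.

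You also make the maximum strict before invoking uniform convergence, and you justify the diagonal reduction to separating points. The paper leaves both steps implicit.
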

\begin{proof}
    Since $\mathrm{Cut}(N_0, \mathfrak{g}_0) = \overline{\mathrm{Sep}(N_0, \mathfrak{g}_0)}$, without loss of generality, we assume $x_0 \in \mathrm{Sep}(N_0, \mathfrak{g}_0)$. By \autoref{thm:injectivityRadiusContinuous}, we have $\lim \mathrm{Inj} (N_i, \mathfrak{g}_i) = \mathrm{Inj} (N_0, \mathfrak{g}_0) > 0$. Hence, there exists some $\delta > 0$ such that $\delta < \mathrm{Inj}(N_i, \mathfrak{g}_i)$ for all $i \ge 0$. Consider the open sets
    \[\Omega_i \coloneqq \left\{ x \in M \;\middle|\; d_{\mathfrak{g}_i} (N_i, x) > \delta \right\}.\]
    Clearly, $\Omega_i$ is the complement of a normal tubular neighborhood of $N_i$, and $\partial \Omega_i$ is a smooth hypersurface of $M$, diffeomorphic to the unit normal sphere bundle of $N_i$. Observe that 
    \begin{equation}\label{eq:cutSepIn}
        \mathrm{Cut}(N_i, \mathfrak{g}_i) = \mathrm{Cut}(\partial \Omega_i, \mathfrak{g}_i) \cap \Omega_i, \quad \mathrm{Sep}(N_i, \mathfrak{g}_i) = \mathrm{Sep}(\partial \Omega_i, \mathfrak{g}_i) \cap \Omega_i.
    \end{equation}
    We have the functions 
    \[u_i \coloneqq d_{\mathfrak{g}_i} (N_i, \_) - \delta,\]
    which restricts to the distance function $d_{\mathfrak{g}_i} (\partial \Omega_i, \_)$ on $\Omega_i$. Since $x_0 \in \Omega_0$ is a separating point of $\partial \Omega_0$, by \autoref{prop:separatingPointCharacterization}, we have a $C^1$ function $\phi$ near $x_0$ such that $u_0 - \phi$ attains a local maximum at $x_0$, and $\left\lVert d_{x_0} \phi \right\rVert_{\mathfrak{g}_0} < 1$. Now, $u_i \xrightarrow{C^0} u_0$ by \autoref{prop:convergenceConsequences}. Hence, we have points $x_i$ such that $\lim x_i = x_0$, and $u_i - \phi$ attains a local maximum at $x_i$. Now, 
    \[\lim_i \left\lVert d_{x_i} \phi \right\rVert_{\mathfrak{g}_i} = \left\lVert d_{x_0}\phi \right\rVert_{\mathfrak{g}_0} < 1.\]
    Hence, for $i$ large, we have $\left\lVert d_{x_i} \phi \right\rVert < 1$. Again by \autoref{prop:separatingPointCharacterization}, it follows that $x_i \in \mathrm{Sep}(\partial \Omega_i, \mathfrak{g}_i)$. Thus, we have $x_i \in \mathrm{Sep}(N_i, \mathfrak{g}_i)$ with $\lim x_i = x_0$.
\end{proof}

\begin{lemma}\label{lemma:conv_2}
    Suppose $x_{i_j} \in \mathrm{Cut}(N_i, \mathfrak{g}_i)$ is a convergent sequence, with $x_0 = \lim x_{i_j}$. Then, $x_0 \in \mathrm{Cut}(N_0, \mathfrak{g}_0)$.
\end{lemma}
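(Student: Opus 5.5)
We reduce to separating points and then pass to the limit, case by case. Since $\mathrm{Cut}(N_i, \mathfrak{g}_i) = \overline{\mathrm{Sep}(N_i, \mathfrak{g}_i)}$, every $x_{i_j}$ can be approximated arbitrarily well by separating points. By a diagonal argument we may therefore assume $x_{i_j} \in \mathrm{Sep}(N_{i_j}, \mathfrak{g}_{i_j})$. Write $i$ for $i_j$. For each $i$ choose two distinct $N_i$-segments $\gamma_i^1, \gamma_i^2$ with initial unit normals $\mathbf{n}_i^1 \neq \mathbf{n}_i^2 \in \nu^1(N_i, \mathfrak{g}_i)$. Both reach $x_i$ at time $t_i = d_{\mathfrak{g}_i}(N_i, x_i)$.

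By \autoref{prop:convergenceConsequences} \hyperref[prop:convergenceConsequences:distance]{(3)}, $t_i \to t_0 = d_{\mathfrak{g}_0}(N_0, x_0)$. By \autoref{thm:injectivityRadiusContinuous}, $t_i \ge \mathrm{Inj}(N_i,\mathfrak{g}_i) \to \mathrm{Inj}(N_0,\mathfrak{g}_0) > 0$, so $t_0 > 0$ and $x_0 \notin N_0$. Using the compactness argument from \autoref{prop:convergenceConsequences} \hyperref[prop:convergenceConsequences:sectional]{(2)}, pass to a subsequence with $\mathbf{n}_i^k \to \mathbf{n}_0^k \in \nu^1(N_0,\mathfrak{g}_0)$ for $k = 1, 2$. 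By the geodesic convergence \cite[Lemma 1.5]{sakaiInjectivityCont}, $\gamma_i^k \to \gamma_0^k := \gamma^{\mathfrak{g}_0}_{\mathbf{n}_0^k}$ uniformly on $[0, t_0]$, with $\gamma_0^k(t_0) = x_0$. Passing to the limit in $d_{\mathfrak{g}_i}(N_i, \gamma_i^k(t)) = t$, again via \autoref{prop:convergenceConsequences} \hyperref[prop:convergenceConsequences:distance]{(3)}, shows that $\gamma_0^k|_{[0,t_0]}$ is an $N_0$-segment to $x_0$.

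There are now two cases. \textbf{Case 1:} $\mathbf{n}_0^1 \neq \mathbf{n}_0^2$. Then $\gamma_0^1 \ne \gamma_0^2$ are two distinct $N_0$-segments to $x_0$. Hence $x_0 \in \mathrm{Sep}(N_0,\mathfrak{g}_0) \subset \mathrm{Cut}(N_0,\mathfrak{g}_0)$. \textbf{Case 2:} $\mathbf{n}_0^1 = \mathbf{n}_0^2 =: \mathbf{n}_0$. Then $\mathbf{v}_i^k := t_i \mathbf{n}_i^k$ satisfy $\mathcal{E}_i(\mathbf{v}_i^1) = \mathcal{E}_i(\mathbf{v}_i^2)$, and both converge to $t_0\mathbf{n}_0$. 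By \autoref{prop:convergenceConsequences} \hyperref[prop:convergenceConsequences:injectivity]{(1)}, which uses the $C^2$ convergence of the metrics and the $C^1$ convergence of the embeddings, $t_0\mathbf{n}_0$ is a critical point of $\mathcal{E}_0$. So $x_0$ is a focal point of $N_0$ along the $N_0$-segment $\gamma_0^1$. Since an $N_0$-geodesic cannot minimize past a focal point \cite[Lemma 2.11, pg 96]{sakaiBook}, we get $\rho_{\mathfrak{g}_0}(N_0,\mathbf{n}_0) \le t_0$. Minimality of $\gamma_0^1$ up to $t_0$ gives $\rho_{\mathfrak{g}_0}(N_0,\mathbf{n}_0) = t_0$, so $x_0 \in \mathrm{Cut}(N_0,\mathfrak{g}_0)$.

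The main obstacle is Case 2: the two segments collapse in the limit, and the argument needs that coinciding nearby preimages force a critical point. This is exactly what \autoref{prop:convergenceConsequences} \hyperref[prop:convergenceConsequences:injectivity]{(1)} provides, so the real care lies in checking its hypotheses, namely that the $\mathbf{v}_i^k$ converge in the total space to a point over $N_0$. This follows from $t_i \to t_0$ together with the convergence of base points, since $f_i \to f_0$.

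An alternative for the whole proof is the viscosity route: $x_0$ is a limit of points where $u_i$ fails to be differentiable, and one passes to the limit with \autoref{prop:separatingPointCharacterization} as in \cite{bhowmickItohPrasadCutLocusStability}. The direct geodesic argument above seems simpler here, because $t_0 > 0$ is guaranteed by \autoref{thm:injectivityRadiusContinuous}.
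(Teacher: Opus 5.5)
Your proof is correct and is essentially the paper's argument. The paper assumes $x_0 \notin \mathrm{Cut}(N_0,\mathfrak{g}_0)$, so the limiting $N_0$-segment is unique and the two sequences of normals collapse to the same limit; \autoref{prop:convergenceConsequences}~(1) then contradicts $\mathcal{E}_0$ being a local diffeomorphism at $L_0\mathbf{v}_0$. Your Case 2 runs this same step directly, and your Case 1 covers exactly what the paper's uniqueness assumption rules out.

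Your appeal to \autoref{thm:injectivityRadiusContinuous} to get $t_0>0$ is harmless but not needed: a collapse onto the zero section is also excluded by \autoref{prop:convergenceConsequences}~(1), and the paper's proof does without it.
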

\begin{proof}
    Since $\mathrm{Cut}(N_i, \mathfrak{g}_i) = \overline{\mathrm{Sep}(N_i, \mathfrak{g}_i)}$, by relabeling, without loss of generality assume that $x_i \in \mathrm{Sep}(N_i, \mathfrak{g}_i)$, and $x_0 = \lim x_i$. We show that $x_0 \in \mathrm{Cut}(N_0, \mathfrak{g}_0)$.
    
    If possible, assume that $x_0 \not\in \mathrm{Cut}(N_0, \mathfrak{g}_0)$. Let $L_i \coloneqq d_{\mathfrak{g}_i}(N_i, \mathfrak{g}_i)$ for $i \ge 0$. By \autoref{prop:convergenceConsequences} \hyperref[prop:convergenceConsequences:distance]{(3)}, we have $L_0 = \lim L_i$. As $x_0$ is not a cut point of $N_0$, let $\gamma_0 \coloneqq \gamma^{\mathfrak{g}_0}_{\mathbf{v}_0} : [0, L_0] \rightarrow M$ be the unique $N_0$-segment joining $N_0$ to $x_0$. On the other hand, for $i \ge 1$, since $x_i$ is a separating point of $N_i$, we have $\mathbf{v}_i^1 \neq \mathbf{v}_i^2 \in \nu^1 (N_i, \mathfrak{g}_i)$, such that $\gamma_i^j \coloneqq \gamma^{\mathfrak{g}_i}_{\mathbf{v}_i^j} : [0, L_i] \rightarrow M$ be $N_i$-segments joining $N_i$ to $x_i$. As argued in \autoref{prop:convergenceConsequences} \hyperref[prop:convergenceConsequences:sectional]{(2)}, passing to a subsequence, we have $\mathbf{v}_i^j \rightarrow \mathbf{v}^j \in \nu (N_0, \mathfrak{g}_0)$. Consider the $N_0$-geodesics $\eta^j \coloneqq \gamma^{\mathfrak{g}_0}_{\mathbf{v}^j} : [0, L] \rightarrow M$ for $j = 1,2$. Clearly, $\eta^j(L) = \lim \gamma_i^j (L_i)$, and hence, $\eta^j$ is an $N_0$-segment. By uniqueness, we have $\eta^1 = \gamma_0 = \eta^2$, and in particular, $\mathbf{v}_i^j \rightarrow \mathbf{v}_0$ for $j = 1, 2$. Now, the exponential map $\mathcal{E}_0 : \nu(N_0, \mathfrak{g}_0) \rightarrow M$ is a local diffeomorphism near $L_0 \mathbf{v}_0$, since $x_0 = \mathcal{E}_0 (L_0 \mathbf{v}_0)$ is not a cut point of $N_0$. On the other hand, we have $\mathcal{E}_i (L_i \mathbf{v}_i^1) = x_i = \mathcal{E}_i (L_i \mathbf{v}_i^2)$. By \autoref{prop:convergenceConsequences} \hyperref[prop:convergenceConsequences:injectivity]{(1)}, this is a contradiction. Thus, we must have $x_0 \in \mathrm{Cut}(N_0, \mathfrak{g}_0)$.
\end{proof}

In view of \autoref{prop:hausdorffConvergence}, the proof of \autoref{thm:stabilityOfCutLocus} is now immediate from \autoref{lemma:conv_1} and \autoref{lemma:conv_2}.

\subsection{Application}
Let us mention two corollaries to \autoref{lemma:conv_2}. In the first one, we get the continuity of the cut time maps, which is known if both the metric and the submanifold are kept fixed \cite{sakaiBook,basuPrasadCutLocus,bhowmickPrasadFinslerCutLocus}.
\begin{corollary}\label{cor:continuityCutTime}
    Let $M, N$ be compact manifolds without boundary, $f_i \xrightarrow{C^2} f_0$ be a convergent family of embeddings $N \hookrightarrow M$ and $\mathfrak{g}_i \xrightarrow{C^2} \mathfrak{g}_0$ be a convergent family of Riemannian metrics on $M$. Suppose $\mathbf{n}_i \in \nu^1 (f(N_i), \mathfrak{g}_i)$ are unit normal vectors, converging to $\mathbf{n}_0$, necessarily in $\nu^1 (f(N_0), \mathfrak{g}_0)$. Then, $\lim \rho_{\mathfrak{g}_i} (f(N_i), \mathbf{n}_i) = \rho_{\mathfrak{g}_0} (f(N_0), \mathbf{n}_0)$.
\end{corollary}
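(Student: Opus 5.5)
We prove the two inequalities $\limsup_i \rho_i \le \rho_0$ and $\liminf_i \rho_i \ge \rho_0$ separately, where $\rho_i \coloneqq \rho_{\mathfrak{g}_i}(N_i, \mathbf{n}_i)$ and $N_i = f_i(N)$. Since $M$ is compact, all $\rho_i$ are bounded by $\sup_i \mathrm{diam}_{\mathfrak{g}_i}(M) < \infty$, using that the metrics converge uniformly. Hence it suffices to show that every convergent subsequence of $(\rho_i)$ has limit $\rho_0$. Note also that $\rho_i \ge \mathrm{Inj}(N_i, \mathfrak{g}_i) \ge \delta > 0$ for a uniform $\delta$, by \autoref{thm:injectivityRadiusContinuous}.

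\emph{Upper bound.} Pass to a subsequence with $\rho_i \to R$. The point $x_i \coloneqq \gamma^{\mathfrak{g}_i}_{\mathbf{n}_i}(\rho_i)$ lies in $\mathrm{Cut}(N_i, \mathfrak{g}_i)$. By \cite[Lemma 1.5]{sakaiInjectivityCont}, the ODE dependence gives $x_i \to x_0 \coloneqq \gamma^{\mathfrak{g}_0}_{\mathbf{n}_0}(R)$, and \autoref{lemma:conv_2} yields $x_0 \in \mathrm{Cut}(N_0, \mathfrak{g}_0)$. Moreover, $\gamma_i|_{[0,\rho_i]}$ is an $N_i$-segment, so $d_{\mathfrak{g}_i}(N_i, x_i) = \rho_i$. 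By \autoref{prop:convergenceConsequences} \hyperref[prop:convergenceConsequences:distance]{(3)}, this gives $d_{\mathfrak{g}_0}(N_0, x_0) = R$, so $\gamma^{\mathfrak{g}_0}_{\mathbf{n}_0}|_{[0,R]}$ is an $N_0$-segment ending at a cut point.

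The remaining point is to conclude that $R = \rho_0$, that is, that a minimizing $N_0$-geodesic cannot pass through a cut point in its interior. This is the standard fact that a cut point along a segment is reached exactly at the cut time; otherwise $x_0$ would be an interior point of a longer segment, and interior points of $N_0$-segments are neither separating nor focal. Since $\mathrm{Cut} = \overline{\mathrm{Sep}}$ is used throughout, I would invoke the known characterization from \cite{basuPrasadCutLocus,bhowmickPrasadFinslerCutLocus} that $x_0 = \gamma_{\mathbf{n}_0}(R)$ with $d(N_0, x_0) = R$ lies in $\mathrm{Cut}$ if and only if $R = \rho_0(\mathbf{n}_0)$. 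This gives $R = \rho_0$, and in particular $\limsup_i \rho_i \le \rho_0$.

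\emph{Lower bound.} Pass to a subsequence with $\rho_i \to R$; from the above we already know $R = \rho_0$. Alternatively, $R \ge \rho_0$ can be argued directly: for any $t < \rho_0$, suppose that $\rho_i < t$ along a subsequence. The cut points $\gamma_i(\rho_i)$ then converge to $\gamma_0(R')$ for some $R' \le t < \rho_0$, which lies in the cut locus by \autoref{lemma:conv_2}. But it also lies at distance $R'$ from $N_0$ by \autoref{prop:convergenceConsequences} \hyperref[prop:convergenceConsequences:distance]{(3)}, contradicting the characterization above. Either way, every subsequential limit equals $\rho_0$, which proves the claim.

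The main obstacle is the convergence $x_i \to x_0$ and the identification of the limit point as the endpoint of a segment at the correct parameter. This requires the normal vectors $\mathbf{n}_i$ to converge in $TM$, which is the hypothesis, together with the uniform ODE continuity in \cite{sakaiInjectivityCont}. The rest reduces to \autoref{lemma:conv_2} and the fixed-metric fact that cut points along a segment occur only at the cut time.
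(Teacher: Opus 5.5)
Your proof is correct and follows the same route as the paper: you bound the cut times uniformly, pass to a convergent subsequence $\rho_{i_j} \to R$, and use \autoref{lemma:conv_2} to place the limit point $\gamma^{\mathfrak{g}_0}_{\mathbf{n}_0}(R)$ in $\mathrm{Cut}(N_0, \mathfrak{g}_0)$. You also use part (3) of \autoref{prop:convergenceConsequences} to get $d_{\mathfrak{g}_0}(N_0, x_0) = R$, so that $\gamma^{\mathfrak{g}_0}_{\mathbf{n}_0}|_{[0,R]}$ is an $N_0$-segment, and then invoke the fact that cut points cannot lie in the interior of a segment; this usefully spells out what the paper compresses into ``this forces $R = R_0$'', since membership in the cut locus alone would not rule out $R > \rho_0$.
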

\begin{proof}
    Denote $N_i = f_i (N) \subset M$ for $i \ge 0$. Since $\mathfrak{g}_i \xrightarrow{C^2} \mathfrak{g}_0$, it follows that there exists some $K > 0$ such that $d_{\mathfrak{g}_i} (x, y) < K$ for all $x, y \in M$ and $i \ge 0$. Set $R_i \coloneqq \rho_{\mathfrak{g}_i} (N_i, \mathfrak{g}_i)$. Then, $0 < \mathrm{Inj}(N_i, \mathfrak{g}_i) \le R_i < K$. In particular, $R_i \in [0, K]$ is a bounded sequence. Set $x_i = \gamma^{\mathfrak{g}_i}_{\mathbf{n}_i}(R_i) \in \mathrm{Cut}(N_i, \mathfrak{g}_i)$. Suppose, $R_{i_j}$ is convergent subsequence, with $R \coloneqq \lim_j R_{i_j}$. Set $x_0 = \gamma^{\mathfrak{g}_0}_{\mathbf{n}_0} (R)$. It follows that $x_0 = \lim x_{i_j}$. But then by \autoref{lemma:conv_2}, we have $x_0 \in \mathrm{Cut}(N_0, \mathfrak{g}_0)$. This forces $x_0 = \gamma^{\mathfrak{g}_0}_{\mathbf{n}_0} (R_0)$, and hence, $R = R_0 = \rho_{\mathfrak{g}_0} (N_0, \mathbf{n}_0)$. Since any convergent subsequence of the bounded sequence $R_i$ converges to $R_0$, the claim follows.
\end{proof}

Next, we look at the situation when the cut locus is free of focal points. It is well-known that the cut locus is well-behaved in such cases. As an example, it follows from \cite{sugaharaCutLocusFocalLocus,ardoyGuijarroCutLocus} that the non-focal cut points where exactly two $N$-segments meet (called \emph{cleave points} in \cite{ardoyGuijarroCutLocus}) forms a codimension $1$ smooth submanifold of $M$, and it is open dense in the cut locus itself.


\begin{corollary}\label{cor:cutLocusFreeOfFocal}
    Suppose $(M, \mathfrak{h})$ is a compact Riemannian manifold, and $N \subset M$ is a compact submanifold. Assume that $\mathrm{Cut}(N, \mathfrak{h})$ is free of any first focal point of $N$ along any $N$-geodesic (i.e, the $\widetilde{\mathrm{Cut}}(N, \mathfrak{h})$ is free of tangential  focal points). Then, there exists a metric $\mathfrak{g}$, arbitrarily $C^2$ close to $\mathfrak{h}$, such that $\mathrm{Cut}(N, \mathfrak{g})$ is a finite subcomplex of $M$, and moreover, it is still free of first focal points of $N$.
\end{corollary}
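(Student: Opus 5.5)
We will get $\mathfrak{g}$ by perturbing $\mathfrak{h}$ to a \emph{generic} metric near $\mathfrak{h}$ and then using the stability results of this section to show the no-focal property persists. The guiding fact is that absence of focal points on the cut locus is an open condition in the $C^2$ topology. Once we know this, we can pick any $C^2$-small generic perturbation for which the cut locus is known to be a finite complex.

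\textbf{Step 1: the no-focal hypothesis gives a uniform gap.} By \autoref{cor:continuityCutTime}, applied with constant sequences, the cut time $\mathbf{n} \mapsto \rho_{\mathfrak{h}}(N, \mathbf{n})$ is continuous on the compact set $\nu^1(N, \mathfrak{h})$. The first focal time $\mathbf{n} \mapsto \lambda_{\mathfrak{h}}(N, \mathbf{n})$ is lower semicontinuous. We always have $\rho \le \lambda$, and the hypothesis says $\rho < \lambda$ everywhere. Compactness then yields a uniform $\epsilon > 0$ with
\[\rho_{\mathfrak{h}}(N,\mathbf{n}) + \epsilon < \lambda_{\mathfrak{h}}(N,\mathbf{n}) \quad \text{for all } \mathbf{n} \in \nu^1(N, \mathfrak{h}).\]

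\textbf{Step 2: the gap survives $C^2$ perturbation.} Argue by contradiction. Suppose $\mathfrak{g}_i \xrightarrow{C^2} \mathfrak{h}$, and suppose $\mathrm{Cut}(N, \mathfrak{g}_i)$ contains a first focal point along $\gamma^{\mathfrak{g}_i}_{\mathbf{n}_i}$, at time $\rho_{\mathfrak{g}_i}(N,\mathbf{n}_i)$. Pass to a subsequence with $\mathbf{n}_i \to \mathbf{n}_0$; this is possible as in \autoref{prop:convergenceConsequences}(2).
\begin{itemize}
\item By \autoref{cor:continuityCutTime}, $\rho_{\mathfrak{g}_i}(N,\mathbf{n}_i) \to \rho_{\mathfrak{h}}(N,\mathbf{n}_0)$.
\item The kernel-limit argument of Case 2 of \autoref{lemma:injectivityRadiusLimInf} shows that $\rho_{\mathfrak{h}}(N,\mathbf{n}_0)\mathbf{n}_0$ is a critical point of $\exp^\nu_{\mathfrak{h}}$. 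That argument uses that $\exp_{\mathfrak{g}_i} \to \exp_{\mathfrak{h}}$ in $C^1$.
\item Hence $\rho_{\mathfrak{h}}(N,\mathbf{n}_0) \ge \lambda_{\mathfrak{h}}(N,\mathbf{n}_0)$, contradicting Step 1.
\end{itemize}
So there is a $C^2$-neighbourhood $\mathcal{U}$ of $\mathfrak{h}$ in which every metric has cut locus free of first focal points. \autoref{thm:stabilityOfCutLocus} additionally guarantees that these cut loci are Hausdorff close to $\mathrm{Cut}(N, \mathfrak{h})$.

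\textbf{Step 3: choose a generic $\mathfrak{g} \in \mathcal{U}$.} For $\mathfrak{g} \in \mathcal{U}$, every cut point is non-focal. Each cut point $q$ is therefore reached by finitely many $N$-segments, and $\exp^\nu_{\mathfrak{g}}$ is a local diffeomorphism near each corresponding tangent vector. Near $q$ the distance function $u = d_{\mathfrak{g}}(N,\_)$ is the minimum of finitely many smooth local branches $u_1, \dots, u_k$, whose differentials are the velocities of the segments.

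We then choose $\mathfrak{g}$ by a transversality (Thom/Buchner-type) argument, perturbing the metric away from $N$. The goal is that, at every cut point, the branch differentials $d u_j$ are in general position:
\begin{itemize}
\item any $k$ of the differences $d u_j - d u_1$ are linearly independent, so $k \le \dim M + 1$;
\item the strata $\{u_{j_1} = \dots = u_{j_k}\}$ are smooth of codimension $k-1$ and meet transversally.
\end{itemize}
Such metrics are dense in $\mathcal{U}$. For them, $\mathrm{Cut}(N,\mathfrak{g})$ is locally a finite union of the cells $\{u_{j_1} = \dots = u_{j_k} \le u_l \ \forall l\}$, which is a stratification by the number of segments. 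By compactness it is a finite stratified set, which can be triangulated as a finite subcomplex.

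\textbf{Main obstacle.} The hard step is Step 3. We must show that general position of the branch differentials can be achieved by arbitrarily $C^2$-small metric perturbations; this requires a parametric transversality argument for the multi-jet of $u$. We must then pass from a Whitney stratification to a finite triangulation. For this we would invoke Buchner's genericity argument \cite{buchnerCutLocusStabilityDim6}, adapted from points to submanifolds; the non-focal property from Step 2 makes the local structure a clean finite minimum of smooth functions.
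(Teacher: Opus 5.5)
Your Steps 1 and 2 are correct and agree with the first half of the paper's proof. There, one takes focal cut points $q_{i_j}\in\mathrm{Cut}(N,\mathfrak{h}_{i_j})$, uses \autoref{lemma:conv_2} to put the limit in $\mathrm{Cut}(N,\mathfrak{h})$, and a limiting-kernel argument to make it a first focal point. You do the same thing via \autoref{cor:continuityCutTime}. The uniform gap $\epsilon$ from Step 1 is never used: the pointwise inequality $\rho_{\mathfrak{h}}<\lambda_{\mathfrak{h}}$ already contradicts $\rho_{\mathfrak{h}}(N,\mathbf{n}_0)\ge\lambda_{\mathfrak{h}}(N,\mathbf{n}_0)$.

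The gap is Step 3, which carries the whole existence claim. You assert that metrics whose distance branches $u_j$ are in general position at every cut point are $C^2$-dense in $\mathcal{U}$, but you do not prove it. It amounts to a multi-jet transversality theorem for the normal exponential maps of a fixed $N$, parametrized by metrics. You would need to show that metric perturbations supported away from $N$ move the tuples $(u_{j_1},\dots,u_{j_k})$ and their differentials independently enough. Buchner's genericity theory \cite{buchnerCutLocusStabilityDim6} treats the exponential map of a point and stability in dimensions $\le 6$; it does not supply this, and you leave the adaptation open yourself. The passage from your local models to a finite triangulation, via triangulability of stratified sets, is also only asserted. As written, you have shown only that being free of first focal points is a $C^2$-open condition.

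The missing idea is that no genericity is needed. By Whitney, $M$ carries a compatible real-analytic structure, so $\mathfrak{h}$ is a $C^2$-limit of analytic metrics $\mathfrak{h}_i$. By your Step 2 these lie in $\mathcal{U}$ for $i$ large. For analytic data the cut locus of a submanifold admits a triangulation \cite{buchnerSimplicialCutLocus,basuPrasadCutLocus}, so by compactness of $M$ it is a finite subcomplex. This is exactly how the paper concludes. Your transversality route, if completed, would give more: a stratification by the number of $N$-segments, with no analyticity assumptions. But it is a substantial result in its own right and cannot be taken as a step here.
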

\begin{proof}
    Suppose $\mathfrak{h}_i \xrightarrow{C^2} \mathfrak{h}$ is a sequence of metrics on $M$. We claim that for $i$ sufficiently large, $\widetilde{\mathrm{Cut}}(N, \mathfrak{h}_i)$ is free of focal points. If not, then there exists a subsequence of points $q_{i_j} \in \mathrm{Cut}(N, \mathfrak{h}_{i_j})$, each of which is also a focal point, necessarily the first one along an $N$-segment with respect to $\mathfrak{h}_{i_j}$. Using the compactness of $M$, passing to a further subsequence, we have $q_{i_j} \rightarrow q$. Then by \autoref{lemma:conv_2}, we have $q \in \mathrm{Cut}(N, \mathfrak{h})$. On the other hand, as argued in the proof of \autoref{lemma:conv_2}, $q$ must be a focal point of $N$, necessarily the first one along some $N$-segment with respect to $\mathfrak{h}$. This is a contradiction to the hypothesis.


    Now, by Whitney's theorems, we have a compatible real analytic structure on $M$ (which is unique), whence, we can approximate $\mathfrak{h}$ arbitrarily $C^2$-close by an analytic metric. Let $\mathfrak{h}_i$ be analytic metrics, such that $\mathfrak{h}_i \xrightarrow{C^2} \mathfrak{h}$. Set $\mathfrak{g} = \mathfrak{h}_i$ for some $i$ large, and we then have $\widetilde{\mathrm{Cut}}(N, \mathfrak{g})$ is free of focal points. By \cite{buchnerSimplicialCutLocus,basuPrasadCutLocus}, the cut locus of a submanifold admits a triangulation if the metric is analytic. In particular, it follows from the compactness of $M$ that $\mathrm{Cut}(N, \mathfrak{g})$ is a finite subcomplex of $M$.
\end{proof}

\begin{remark}\label{rmk:intersectionCutFocal}
    One may wonder whether the hypothesis of \autoref{cor:cutLocusFreeOfFocal} is justifiable in the first place. In \cite{weinsteinCutConjugateDisjoint}, Weinstein proved that for any compact manifold other than $S^2$, there exists a point $p$ and a metric $\mathfrak{g}$, such that $\widetilde{\mathrm{Cut}}(p, \mathfrak{g})$ is free of focal points. In a forthcoming joint work with T. Schick and S. Prasad, we extend this to submanifolds. In particular, we prove that given any closed submanifold $N$ of a compact manifold $M$, with $\dim M \ge 3$, there exists a metric $\mathfrak{g}$ such that $\widetilde{\mathrm{Cut}}(N, \mathfrak{g})$ is free of focal points.
\end{remark}

%% file: main_injectivity.bbl
\begin{thebibliography}{EGGHT21}

\bibitem[AG11]{ardoyGuijarroCutLocus}
Pablo~Angulo Ardoy and Luis Guijarro.
\newblock Cut and singular loci up to codimension 3.
\newblock {\em Ann. Inst. Fourier (Grenoble)}, 61(4):1655--1681 (2012), 2011.
\newblock \href {https://doi.org/10.5802/aif.2655}
  {\path{doi:10.5802/aif.2655}}.

\bibitem[Alb16]{albanoCutLocusStability}
Paolo Albano.
\newblock On the stability of the cut locus.
\newblock {\em Nonlinear Anal.}, 136:51--61, 2016.
\newblock \href {https://doi.org/10.1016/j.na.2016.02.008}
  {\path{doi:10.1016/j.na.2016.02.008}}.

\bibitem[BIP25]{bhowmickItohPrasadCutLocusStability}
Aritra Bhowmick, Jin{-}ichi Itoh, and Sachchidanand Prasad.
\newblock Hausdorff stability of the cut locus under ${C}^2$-perturbations of
  the metric.
\newblock {\em J. Math. Anal. Appl.}, 557(130324):16, dec 2025.
\newblock \href {https://arxiv.org/abs/2503.19413} {\path{arXiv:2503.19413}},
  \href {https://doi.org/10.1016/j.jmaa.2025.130324}
  {\path{doi:10.1016/j.jmaa.2025.130324}}.

\bibitem[BP23]{basuPrasadCutLocus}
Somnath Basu and Sachchidanand Prasad.
\newblock A connection between cut locus, {T}hom space and {M}orse-{B}ott
  functions.
\newblock {\em Algebr. Geom. Topol.}, 23(9):4185--4233, 2023.
\newblock \href {https://doi.org/10.2140/agt.2023.23.4185}
  {\path{doi:10.2140/agt.2023.23.4185}}.

\bibitem[BP24]{bhowmickPrasadFinslerCutLocus}
Aritra Bhowmick and Sachchidanand Prasad.
\newblock On the cut locus of submanifolds of a {F}insler manifold.
\newblock {\em J. Geom. Anal.}, 34(10):Paper No. 308, 38, 2024.
\newblock \href {https://doi.org/10.1007/s12220-024-01751-1}
  {\path{doi:10.1007/s12220-024-01751-1}}.

\bibitem[Buc77a]{buchnerSimplicialCutLocus}
Michael~A. Buchner.
\newblock Simplicial structure of the real analytic cut locus.
\newblock {\em Proc. Amer. Math. Soc.}, 64(1):118--121, 1977.
\newblock \href {https://doi.org/10.2307/2040994} {\path{doi:10.2307/2040994}}.

\bibitem[Buc77b]{buchnerCutLocusStabilityDim6}
Michael~A. Buchner.
\newblock Stability of the cut locus in dimensions less than or equal to {$6$}.
\newblock {\em Invent. Math.}, 43(3):199--231, 1977.
\newblock \href {https://doi.org/10.1007/BF01390080}
  {\path{doi:10.1007/BF01390080}}.

\bibitem[CS04]{stabilityMedialAxis}
F.~Chazal and R.~Soufflet.
\newblock Stability and finiteness properties of medial axis and skeleton.
\newblock {\em J. Dynam. Control Systems}, 10(2):149--170, 2004.
\newblock \href {https://doi.org/10.1023/B:JODS.0000024119.38784.ff}
  {\path{doi:10.1023/B:JODS.0000024119.38784.ff}}.

\bibitem[EGGHT21]{cutLocusStabilityCLT}
Benjamin Eltzner, Fernando Galaz-Garc\'ia, Stephan Huckemann, and Wilderich
  Tuschmann.
\newblock Stability of the cut locus and a central limit theorem for
  {F}r\'echet means of {R}iemannian manifolds.
\newblock {\em Proc. Amer. Math. Soc.}, 149(9):3947--3963, 2021.
\newblock \href {https://doi.org/10.1090/proc/15429}
  {\path{doi:10.1090/proc/15429}}.

\bibitem[Ehr74]{ehrlichInjectivityCont}
Paul~E. Ehrlich.
\newblock Continuity properties of the injectivity radius function.
\newblock {\em Compositio Math.}, 29:151--178, 1974.

\bibitem[GG73]{guilleminGolubitskyBook}
M.~Golubitsky and V.~Guillemin.
\newblock {\em Stable mappings and their singularities}, volume Vol. 14 of {\em
  Graduate Texts in Mathematics}.
\newblock Springer-Verlag, New York-Heidelberg, 1973.

\bibitem[Hir76]{hirschBook}
Morris~W. Hirsch.
\newblock {\em Differential topology}.
\newblock Graduate Texts in Mathematics, No. 33. Springer-Verlag, New
  York-Heidelberg, 1976.

\bibitem[Lee18]{leeRimenannianManifoldsBook}
John~M. Lee.
\newblock {\em Introduction to {R}iemannian manifolds}, volume 176 of {\em
  Graduate Texts in Mathematics}.
\newblock Springer, Cham, 2018.
\newblock Second edition of [ MR1468735].

\bibitem[MM03]{mantegazzaViscosityManifold}
Carlo Mantegazza and Andrea~Carlo Mennucci.
\newblock Hamilton-{J}acobi equations and distance functions on {R}iemannian
  manifolds.
\newblock {\em Appl. Math. Optim.}, 47(1):1--25, 2003.
\newblock \href {https://doi.org/10.1007/s00245-002-0736-4}
  {\path{doi:10.1007/s00245-002-0736-4}}.

\bibitem[Sak83]{sakaiInjectivityCont}
Takashi Sakai.
\newblock On continuity of injectivity radius function.
\newblock {\em Math. J. Okayama Univ.}, 25(1):91--97, 1983.

\bibitem[Sak96]{sakaiBook}
Takashi Sakai.
\newblock {\em Riemannian geometry}, volume 149 of {\em Translations of
  Mathematical Monographs}.
\newblock American Mathematical Society, Providence, RI, 1996.
\newblock Translated from the 1992 Japanese original by the author.
\newblock \href {https://doi.org/10.1090/mmono/149}
  {\path{doi:10.1090/mmono/149}}.

\bibitem[Sug74]{sugaharaCutLocusFocalLocus}
Kunio Sugahara.
\newblock On the cut locus and the topology of {R}iemannian manifolds.
\newblock {\em J. Math. Kyoto Univ.}, 14:391--411, 1974.
\newblock \href {https://doi.org/10.1215/kjm/1250523244}
  {\path{doi:10.1215/kjm/1250523244}}.

\bibitem[War66]{warnerRauchComparisonSubmanifold}
F.~W. Warner.
\newblock Extensions of the {R}auch comparison theorem to submanifolds.
\newblock {\em Trans. Amer. Math. Soc.}, 122:341--356, 1966.
\newblock \href {https://doi.org/10.2307/1994552} {\path{doi:10.2307/1994552}}.

\bibitem[Wei68]{weinsteinCutConjugateDisjoint}
Alan~D. Weinstein.
\newblock The cut locus and conjugate locus of a riemannian manifold.
\newblock {\em Ann. of Math. (2)}, 87:29--41, 1968.
\newblock \href {https://doi.org/10.2307/1970592} {\path{doi:10.2307/1970592}}.

\end{thebibliography}
